\newtheorem{thm}{Theorem}[section]
\newtheorem{prop}[thm]{Proposition}
\newtheorem{defi}[thm]{Definition}
\newtheorem{lem}[thm]{Lemma}
{\theoremstyle{remark}
\newtheorem{exa}[thm]{Example}
\newtheorem{rem}[thm]{Remark}}
\newcommand{\mult}{\operatorname{mult}}
\newcommand{\aut}{\operatorname{Aut}}
\newcommand{\conj}{\operatorname{conj}}
\newcommand{\id}{\operatorname{id}}
\newcommand{\stab}{\operatorname{Stab}}
\newcommand{\sym}{\operatorname{Sym}}
\newcommand{\symc}{\operatorname{SymC}}
\newcommand{\sal}{\mathcal{S}}
\newcommand{\rl}{\mathcal{R}}
\newcommand{\cl}{\mathcal{C}}
\newcommand{\fl}{\mathcal{F}}
\newcommand{\rb}{\mathbb{R}}
\newcommand{~}{\quad}
\newcommand{\cb}{\mathbb{C}}
\newcommand{\nb}{\mathbb{N}}
\newcommand{\pb}{\mathbb{P}}
\newcommand{\ak}{\mathfrak{a}}
\newcommand{\undl}{\underline}
\begin{document}

\title{On the lower bounds for real double Hurwitz numbers}

\author{Yanqiao Ding}

\thanks{Corresponding author: yqding@zzu.edu.cn}

\address{School of Mathematics and Statistics, Zhengzhou University, Zhengzhou, 450001, China}

\email{yqding@zzu.edu.cn}

\subjclass[2020]{Primary 14N10,14T90; Secondary 14P99}

\keywords{Real enumerative geometry, real Hurwitz numbers, tropical Hurwitz numbers, asymptotic growth.}

\date{\today}

\begin{abstract}
As the real counterpart of double Hurwitz number,
the real double Hurwitz number depends on
the distribution of real branch points.
We consider the problem of asymptotic growth of real and
complex double Hurwitz numbers. We provide a lower bound for real double Hurwitz numbers based on the tropical computation of real double Hurwitz numbers.
By using this lower bound and J. Rau's result ( Math. Ann. 375(1-2): 895-915, 2019),
we prove the logarithmic equivalence of real and complex Hurwitz numbers.
\end{abstract}

\maketitle

\section{Introduction}
The structure and approach of this paper follows \cite{rau2019} closely.
Counting the ramified covers of $\cb P^1$ by a
genus $g$ surface with specified
ramification profiles over a fixed set of points
is a classical problem of enumerative geometry.
The answer to such enumerative problem is called
the Hurwitz number. The Hurwitz number is equivalent to enumerating
the factorizations of the identity into a product
of elements of the symmetric group $\sal_d$
with given cycle types \cite{cm-2016,hurwitz-1891}.
Hurwitz numbers are interesting geometric invariants connecting the
geometry of algebraic curves, combinatorics, tropical geometry, the
representation of symmetric groups and random matrix
models \cite{bbm-2011,cjm-2010,cm-2016,dyz-2017,gjv-2005}. In the recent two decades,
many deep relationships between Hurwitz
numbers and mathematical physics were also found
\cite{lzz-2000,op-2006}.
There is a particular type of Hurwitz numbers which
arouses many mathematicians' interests.
The double Hurwitz number $H^\cb_g(\lambda,\mu)$ counts
covers of $\cb P^1$ by genus $g$ surface with
ramification profiles $\lambda$, $\mu$ over $0$, $\infty$
and simple ramification over other branch points,
where $\lambda$ and $\mu$ are two partitions of an
integer $d\geq1$.
For a partition $\lambda$,
we denote by $l(\lambda)$ the number of parts
of $\lambda$ and call it the length of $\lambda$.
The sum of the parts of $\lambda$ is denoted by $|\lambda|$.
There are many results about the structure of
double Hurwitz numbers such as the polynomiality of
the generating function and the wall-crossing
formulas \cite{cjm-2011,gjv-2005,johnson-2015,ssv-2008}.

In this paper, we consider the real version of double
Hurwitz number.
A \textit{real structure} $\tau$ for
a cover $\pi:C\to\cb P^1$ is an anti-holomorphic involution
such that $\pi\circ\tau=\conj\circ\pi$,
where $\conj$ is the standard complex conjugation.
A pair $(\pi,\tau)$ consisting of a ramified cover $\pi:C\to\cb P^1$
and a real structure $\tau$ is called a \textit{real ramified cover}.
The real double Hurwitz number counts real ramified covers of $\cb P^1$ by genus $g$ surfaces with
particular ramification profiles over $0$, $\infty$
and simple ramification over other branch points.
Note that the set of simple branch points
of a real ramified cover consists of real points in $\rb P^1\setminus\{0,\infty\}$ and complex conjugated pairs.
In the following, we only consider the case that
all the simple branch points are real points in $\rb P^1\setminus\{0,\infty\}$,
and suppose that $s$ of these simple branch points are in the positive half axis of $\rb P^1\setminus\{0,\infty\}$.
Let $H^\rb_g(\lambda,\mu;s)$ denote the real double Hurwitz number counting
real ramified covers of $\cb P^1$ by genus $g$ surface with
ramification profiles $\lambda$, $\mu$ over $0$, $\infty$
and simple ramification over other branch points.
The real double Hurwitz number $H^\rb_g(\lambda,\mu;s)$ depends on the number $s$ of positive real simple branch points.
It is a common phenomenon in real enumerative geometry that the number of real solutions for
a enumerative problem depends on the positions of the point constraints \cite{iks2003,wel2005a,wel2005b}.
In real enumerative
geometry, it is important to find the lower bounds
for the real enumerative problems and to analyse the
properties of these lower bounds.
In the study
of real algebraic curves in real surfaces passing
through certain real points,
signed counts of real solutions
which are called the Welschinger invariants
provide such lower bounds
\cite{gz2018,iks2013b,ks-2015,wel2005a,wel2005b}.
This method is also valid in the study of counting
real covers. Itenberg and Zvonkine
\cite{iz-2018} found such a signed count
of real polynomials
and proved that the signed count of real polynomials
is logarithmically equivalent to the count of
complex polynomials under certain parity conditions.
El Hilany and Rau \cite{er-2019} found
that the construction of Itenberg and Zvonkine
also works for counting real simple rational
functions $\frac{f(x)}{x-p}$, $f(x)\in\rb[x]$, $p\in\rb$.
How to generalize Itenberg and Zvonkine's
signed count to a more general situation is
still unknown. Rau \cite{rau2019} found a lower bound
for real double Hurwitz numbers, and proved the
logarithmic equivalence of real double Hurwitz numbers
and complex double Hurwitz numbers under certain parity conditions.

In this paper, we continue the study on the asymptotic growth of real double Hurwitz numbers when the degree is increased and only
simple ramification points are added.
We prove the logarithmic equivalence of real and classical Hurwitz numbers. Let
\begin{align*}
    h^\cb_{g,\lambda,\mu}(m)&=
    H^\cb_g((\lambda,1^{m}),(\mu,1^{m})),\\
    h^\rb_{g,\lambda,\mu}(m)&=\inf\{
    H^\rb_g((\lambda,1^{m}),(\mu,1^{m});0),
    \ldots,H^\rb_g((\lambda,1^{m}),(\mu,1^{m});r(m))\},
\end{align*}
where $(\lambda,1^m)$ stands for adding $m$ ones to $\lambda$,
and $r(m)=l(\lambda)+l(\mu)+2m+2g-2$.
\begin{thm}\label{thm:main}
Fix $g\in\nb$, and partitions $\lambda$, $\mu$ with
$|\lambda|=|\mu|$. Then
$h^\rb_{g,\lambda,\mu}(m)$ and
$h^\cb_{g,\lambda,\mu}(m)$ are logarithmically
equivalent:
$$
\log h^\rb_{g,\lambda,\mu}(m)\sim
2m\log m
\sim\log
h^\cb_{g,\lambda,\mu}(m), \text{ as } m\to\infty.
$$
\end{thm}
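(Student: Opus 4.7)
The plan is to sandwich both $\log h^\rb_{g,\lambda,\mu;s}(m)$ and $\log h^\cb_{g,\lambda,\mu}(m)$ between quantities equal to $2m\log m + o(m\log m)$. A Riemann--Hurwitz count for a degree $d=|\lambda|+m$ cover with ramification profiles $(\lambda,1^m)$ and $(\mu,1^m)$ over $0$ and $\infty$ and simple ramification elsewhere shows that the number of simple branch points is
$$r = 2g-2+l(\lambda)+l(\mu)+2m = 2m + O(1),$$
so by Stirling $\log(r!) = 2m\log m + O(m)$, which sets the scale of the target.

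One direction of the sandwich is essentially immediate: $h^\rb_{g,\lambda,\mu;s}(m) \le h^\cb_{g,\lambda,\mu}(m)$, since every real cover is in particular a complex cover. To bound the complex side from above I would use either the Burnside/character formula expressing $H^\cb_g$ as a weighted sum over irreducibles of $\sal_d$, or the cruder combinatorial bound on the number of tuples of $r$ transpositions in $\sal_d$ whose product has the prescribed cycle type. Both give $h^\cb_{g,\lambda,\mu}(m) \le r!\cdot P(m)$ for a polynomial $P$ depending on $g,\lambda,\mu$, and hence $\log h^\cb_{g,\lambda,\mu}(m) \le 2m\log m + o(m\log m)$. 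This step is classical and requires no new input.

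The substantive direction is the lower bound on the real count, which is exactly the output of the tropical lower bound for real double Hurwitz numbers developed in the body of the paper. By the real tropical correspondence, $H^\rb_g$ is a signed sum of multiplicities over a finite set of tropical covers. I would exhibit an explicit combinatorial family of tropical covers --- for instance, those in which the $2m$ added simple branch points are arranged along a single long trivalent chain in a prescribed order --- verify via the real tropical sign rule that every member of the family contributes with a fixed sign, and estimate the cardinality of the family as $(2m)!$ up to factors whose logarithm is $o(m\log m)$. This yields $\log h^\rb_{g,\lambda,\mu;s}(m) \ge 2m\log m - o(m\log m)$.

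Combining the three pieces gives
$$2m\log m - o(m\log m) \le \log h^\rb_{g,\lambda,\mu;s}(m) \le \log h^\cb_{g,\lambda,\mu}(m) \le 2m\log m + o(m\log m),$$
which is the asserted logarithmic equivalence. The main obstacle is the lower bound: controlling sign cancellations in the tropical sum requires a delicate combinatorial argument identifying a large subfamily of tropical covers whose real multiplicities do not cancel. This is where Rau's framework and the paper's refinement of the tropical computation enter, and it is the step that does the genuine work; once it is in place, the theorem follows from the clean sandwich above.
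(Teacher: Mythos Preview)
Your sandwich strategy matches the paper's, and the upper bound on $h^\cb$ is indeed routine (the paper simply quotes it from \cite{dyz-2017} and \cite{rau2019}). But your account of the lower bound rests on a misconception that would block the argument if pursued.

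The real tropical correspondence (Theorem~\ref{thm:real-tropical-Hurwitz}) is \emph{not} a signed sum: every real multiplicity $\mult^\rb(\varphi,\rho)$ is a positive integer, so there are no sign cancellations to control. The genuine obstacle is different. The sum expressing $H^\rb_g(\lambda,\mu;s)$ runs over pairs $(\varphi,\rho)$ where the colouring $\rho$ must induce the prescribed splitting $\undl x=\undl x^+\sqcup\undl x^-$ with $|\undl x^+|=s$; a given tropical cover $\varphi$ may admit no such colouring for a particular $s$ and then simply does not appear. Your ``long trivalent chain'' family would contribute only for certain values of $s$, whereas you need a bound uniform in $s$. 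The paper handles this in two moves: first it proves the symmetry $H^\rb_g(\lambda,\mu;s)=H^\rb_g(\lambda,\mu;r-s)$ (Proposition~\ref{prop:rHurwitz-symmetry}), reducing to $s\ge\lceil r/2\rceil$; second it introduces \emph{effective non-zigzag covers} (Definition~\ref{def:non-zigzag}), designed so that each one admits a compatible colouring for every such $s$. Counting these, together with Rau's zigzag covers for a residual parity case, gives the effective number $E_g(\lambda,\mu)\le H^\rb_g(\lambda,\mu;s)$ uniformly in $s$, and the factorial estimate $e_{g,\lambda,\mu}(m)\ge(\lfloor m/2\rfloor-m_0)!^4$ of Proposition~\ref{prop:non-zigzag-asymp} then delivers the required $2m\log m$ growth.
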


For the enumerative problem concerning real rational algebraic curves in real algebraic surfaces,
Itenberg, Kharlamov and Shustin \cite{iks2004,iks2007} showed that Welschinger invariants are logarithmically equivalent to the Gromov-Witten invariants.
Shustin \cite{shustin2015} proved the logarithmic equivalence of higher genus Welschinger invariants
and Gromov-Witten invariants.
In the following, we recall the main asymptotic statements
about real Hurwitz numbers from
\cite{er-2019,iz-2018,rau2019}.
Denote by $S^{even}_{pol}(\lambda_1,\ldots,\lambda_k)$
(resp. $S^{odd}_{pol}(\lambda_1,\ldots,\lambda_k)$)
the signed counts of real polynomials of even (resp. odd)
degree with reduced ramification profiles $\lambda_1,\ldots,\lambda_k$ defined in \cite{iz-2018}.
\begin{thm}[{\cite[Theorem $5$]{iz-2018}}]
Assume that in each partition $\lambda_i$, $i\in\{1,\ldots,k\}$, every even number appears an even
number of times and at most one odd number appears an odd number of times. Then we have
$$
\log |s^{even}_{pol}(m)|\sim m\log m\sim\log
h^{even,\cb}_{pol}(m), \text{ as } m\to\infty,
$$
where $s^{even}_{pol}(m)=S^{even}_{pol}(\lambda_1,\ldots,\lambda_k,1^{m})$,
and $h^{even,\cb}_{pol}(m)$ is the corresponding counts of complex polynomials of even degree.

Assume that in each partition $\lambda_i$, $i\in\{1,\ldots,k\}$, at most one even number appears
an odd number of times and at most one odd number appears
an odd number of times. Then we have
$$
\log |s^{odd}_{pol}(m)|\sim m\log m\sim\log
h^{odd,\cb}_{pol}(m), \text{ as } m\to\infty,
$$
where $s^{odd}_{pol}(m)=S^{odd}_{pol}(\lambda_1,\ldots,\lambda_k,1^{m})$,
and $h^{odd,\cb}_{pol}(m)$ is the corresponding counts of complex polynomials of odd degree.
\end{thm}
Let $S_{rat}(\lambda_1,\ldots,\lambda_k)$
denote the signed counts of real simple
rational functions with reduced ramification profiles
$\lambda_1,\ldots,\lambda_k$ defined in \cite{er-2019}.
\begin{thm}[{\cite[Theorem $1.3$]{er-2019}}]
Assume that in each partition $\lambda_i$, $i\in\{1,\ldots,k\}$, at most one even number appears
an odd number of times and at most one odd number appears
an odd number of times. Then we have
$$
\log |s_{rat}(m)|\sim m\log m\sim\log
h^\cb_{rat}(m), \text{ as } m\to\infty, \text{ and }\sum|\lambda_i|+m\equiv0\mod 2,
$$
where $s_{rat}(m)=S_{rat}(\lambda_1,\ldots,\lambda_k,1^{m})$, and $h^\cb_{rat}(m)$ is the corresponding counts of complex simple rational functions.
When $\sum|\lambda_i|+m\equiv1\mod 2$,
if the partitions
$\lambda_i$, $i\in\{1,\ldots,k\}$, satisfy the above condition and an extra parity condition,
$|s_{rat}(m)|$ is also logarithmically equivalent to $h^\cb_{rat}(m)$.
\end{thm}

A \textit{tropical cover} is a continuous map from a
connected metric graph to $\rb\cup\{\pm\infty\}$
satisfying certain conditions (see Definition $\ref{def:tro-cover}$ for more details).
There is a multiplicity associated with each tropical cover in \cite{gpmr-2015,mr-2015}.
Markwig and Rau \cite{mr-2015} gave a tropical
interpretation of real double Hurwitz numbers via the weighted count of tropical covers.
That is the tropical cover with odd multiplicity which is called \textit{zigzag cover}.
Rau \cite{rau2019} observed that the number of zigzag covers, $Z_g(\lambda,\mu)$, is independent of the number of real positive simple branch points.
From Markwig and Rau's tropical
computation of real double Hurwitz numbers
in \cite{mr-2015}, $Z_g(\lambda,\mu)$ is a lower bound
for real double Hurwitz numbers.
\begin{thm}[{\cite[Theorem $5.10$]{rau2019}}]
Fix $g\in\nb$, and partitions $\lambda$, $\mu$ with
$|\lambda|=|\mu|$. Assume that
the number of odd elements which appear an
odd number of times in $\lambda$ plus the number of odd elements which appear an odd number
of times in $\mu$ is $0$ or $2$.
Then $z_{g,\lambda,\mu}(2m)$ and
$h^\cb_{g,\lambda,\mu}(2m)$ are logarithmically
equivalent,
$$
\log z_{g,\lambda,\mu}(2m)\sim 4m\log m\sim\log
h^\cb_{g,\lambda,\mu}(2m), \text{ as } m\to\infty,
$$
where $z_{g,\lambda,\mu}(2m)=Z_g((\lambda,1^{2m}),(\mu,1^{2m}))$.
\end{thm}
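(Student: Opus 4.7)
I would sandwich $z_{g,\lambda,\mu}(2m)$ between $h^\cb_{g,\lambda,\mu}(2m)$ above and an explicit combinatorial lower bound below, both of the same $\log$-order. By Riemann--Hurwitz, a cover contributing to $h^\cb_{g,\lambda,\mu}(2m)$ has $r(m)=2g-2+l(\lambda)+l(\mu)+4m$ simple branch points. Using the Frobenius character formula together with standard symmetric-group character estimates (equivalently, the Goulden--Jackson--Vakil piecewise polynomiality on the generating-function side), $h^\cb_{g,\lambda,\mu}(2m)$ equals $r(m)!$ times a factor polynomial in $m$, so by Stirling $\log h^\cb_{g,\lambda,\mu}(2m)\sim r(m)\log r(m)\sim 4m\log m$. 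The upper bound $z\le h^\cb$ is automatic from the Markwig--Rau tropical correspondence $h^\cb_{g,\lambda,\mu}(2m)=\sum_\pi\mult(\pi)$, the sum being over tropical covers with the prescribed profile with $\mult(\pi)$ a positive integer; a zigzag cover has $\mult(\pi)$ odd and so contributes at least $1$ to the sum.

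\textbf{Matching lower bound on $z$.} The technical heart is constructing at least $r(m)!/\mathrm{poly}(m)$ distinct zigzag covers. I would proceed in two steps. First, use the parity hypothesis on odd parts appearing odd numbers of times to exhibit a single ``seed'' zigzag cover $\pi_0$ of profile $(\lambda,\mu)$; this hypothesis encodes the global parity obstruction on minimal tropical models, and its vanishing ($0$ or $2$) is precisely what permits a globally odd Markwig--Rau multiplicity. Second, along the edges of $\pi_0$ perform $2m$ ``parity-neutral'' zigzag insertions, each absorbing one new $1$ from both $\lambda$ and $\mu$ and contributing local Markwig--Rau multiplicity $1$. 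Since $\mult$ factors over vertices, each insertion preserves oddness. Varying the linear ordering of the $r(m)$ simple branch points along $\rb$ produces at least $r(m)!/C(m)$ distinct tropical covers, with $C(m)$ polynomial in $m$, giving $\log z_{g,\lambda,\mu}(2m)\gtrsim 4m\log m$ and closing the sandwich.

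\textbf{Main obstacle.} The lower bound is where the work concentrates. Two issues require careful verification: that each zigzag insertion strictly preserves oddness of the global $\mult$, which needs a local parity analysis of the trivalent configurations produced, especially when inserted near loops or multi-edges of $\pi_0$; and injectivity of the construction, i.e., that distinct ordered insertion patterns yield tropical covers that are genuinely non-isomorphic modulo the automorphism group of $\pi_0$ extended by the new edges. Controlling this automorphism count to be polynomial in $m$ is exactly what allows the factorial growth to survive in the $\log$ asymptotic. The parity hypothesis enters only once, in securing the seed $\pi_0$, but it is indispensable: no local insertion can correct a global parity obstruction, which is why the theorem restricts to the $0$ or $2$ case.
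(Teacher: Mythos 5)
This theorem is quoted from Rau's paper (\cite[Theorem 5.10]{rau2019}) and is not proved in the present text; the relevant comparison is with Rau's argument, which the paper mirrors in its Section 4 for the effective non-zigzag analogue. Your strategy — upper bound via $Z_g\le H^\rb_g\le H^\cb_g$ together with $\log H^\cb_g(m)\sim 2m\log m$, and a matching lower bound obtained by fixing one ``seed'' zigzag combinatorial type (whose existence is exactly what the parity hypothesis on odd parts secures, since the string $S$ can absorb at most two unpaired odd ends) and then counting the distinct covers obtained by reordering branch points — is essentially that proof.

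One quantitative claim in your sketch is wrong as stated, though not fatally: permuting the $r(m)$ simple branch points does \emph{not} yield $r(m)!/C(m)$ distinct covers with $C(m)$ polynomial. The orientation of the graph imposes a partial order on the inner vertices, and only its linear extensions are realizable; for the zigzag graph with $\approx m$ symmetric $(1,1)$-forks on each side and $\approx 2m$ bends of $S$, the count of linear extensions is a product of factorials of the freely permutable blocks, roughly $f_l!\,f_r!\,\lfloor B/2\rfloor!\,\lceil B/2\rceil!\approx \left((m-O(1))!\right)^4$, which is far smaller than $(4m)!$. Fortunately $\log\left((m!)^4\right)\sim 4m\log m$ as well, so the conclusion survives; but if you needed the full $r(m)!$ your argument would have a genuine gap, and an honest write-up must identify the partial order and count its linear extensions rather than all orderings. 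A second minor imprecision: the complex tropical multiplicity $\mult^\cb(\varphi)=|\aut(\varphi)|^{-1}\prod\omega(e)$ of a zigzag cover need not be at least $1$ (symmetric forks contribute to $|\aut|$ but not to the edge product), so the inequality $Z_g\le H^\cb_g$ should be routed through the real count, where odd multiplicity does force $\mult^\rb\ge1$.
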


\begin{rem}
The assumption on odd elements in
$\lambda$ and $\mu$ in
\cite[Theorem $5.10$]{rau2019} is a necessary
condition to guarantee the existence of zigzag
covers.
\end{rem}

We provide a new lower bound for real double Hurwitz numbers to prove our main result.
A \textit{real tropical cover} is a pair consisting of
a tropical cover and a colouring of it.
In the correspondence theorem \cite{mr-2015}, the real double Hurwitz number $H^\rb_g(\lambda,\mu;s)$
is expressed as a weighted sum over isomorphism classes of real tropical covers.
From \cite[Proposition $4.8$ and Lemma $4.14$]{rau2019},
we know that the set of zigzag covers is exactly the set of
tropical covers admitting a colouring compatible with
any splitting of real branch points.
We have an observation that
$H^\rb_g(\lambda,\mu;s)=H^\rb_g(\lambda,\mu;r-s)$,
where $r$ is the number of real simple branch
points of the ramified covers counted by
$H^\rb_g(\lambda,\mu;s)$ (see Proposition $\ref{prop:rHurwitz-symmetry}$).
The integer $r=l(\lambda)+l(\mu)+2g-2$ is determined
by Riemann-Hurwitz formula. In order to
find a lower bound for $H^\rb_g(\lambda,\mu;s)$,
$0\leq s\leq r$, we only need to find a lower bound
for $H^\rb_g(\lambda,\mu;s)$,
$\left \lceil\frac{r}{2}\right\rceil\leq s\leq r$.
Our idea is to characterize a set of tropical covers such that
for any $s\geq\left \lceil\frac{r}{2}\right\rceil$
the tropical cover in this set admits a colouring with $s$ positive branch points.
From \cite{mr-2015}, we know that the number of these covers is a lower bound for real double
Hurwitz numbers. It is easy to see that this lower
bound is bigger than the number of zigzag covers,
because some tropical covers with even multiplicity
are also counted.
We call this new lower bound
the {\it effective number},
and denote it by $E_g(\lambda,\mu)$
(see Definition $\ref{def:effective-number}$).
We have
$$
Z_g(\lambda,\mu)\leq E_g(\lambda,\mu)\leq H^\rb_g(\lambda,\mu;s)\leq
H^\cb_g(\lambda,\mu).
$$
\begin{thm}\label{thm:asymptotic}
Fix $g\in\nb$, and partitions $\lambda$, $\mu$ with
$|\lambda|=|\mu|$.
Suppose that the sum of odd numbers which appear odd number
of times in $\lambda$ is greater than or equal to
the sum of odd numbers which appear odd number
of times in $\mu$,
then $e_{g,\lambda,\mu}(m)$ and
$h^\cb_{g,\lambda,\mu}(m)$ are logarithmically
equivalent for even $m$:
$$
\log e_{g,\lambda,\mu}(m)\sim
2m\log m\sim\log
h^\cb_{g,\lambda,\mu}(m), \text{ as }m\to\infty \text{ for even } m,
$$
where $e_{g,\lambda,\mu}(m)=E_g((\lambda,1^{m}),(\mu,1^{m}))$.
\end{thm}

\begin{rem}
Theorem $\ref{thm:main}$ is a straightforward application
of Theorem $\ref{thm:asymptotic}$ and the symmetry of
$H^\rb_g(\lambda,\mu;s)$ in $\lambda$ and $\mu$
(see Proposition $\ref{prop:rHurwitz-symmetry1}$).
\end{rem}

\section{Double Hurwitz numbers}
In this section, we recall some facts
about double Hurwitz numbers.
The readers may refer to \cite{cjm-2010,cm-2016,mr-2015}
for more details.
\subsection{Complex double Hurwitz numbers}
\label{subsec:complex-Hurwitz}

We fix two integers $d\geq1$, $g\geq0$,
and let $\lambda$ and $\mu$ be two partitions of
$d$.
Fix a collection of
$r=l(\lambda)+l(\mu)+2g-2$ points $\undl p=\{p_1,\ldots,p_r\}\subset\cb P^1\setminus
\{0,\infty\}$.
\begin{defi}
A complex Hurwitz cover of type $(g,\lambda,\mu,\undl p)$
is a degree $d$ holomorphic map $\pi:C\to\cb P^1$
such that:
\begin{itemize}
    \item $C$ is a connected Riemann surface of genus $g$;
    \item $\pi$ ramifies with profiles $\lambda$
    and $\mu$ over $0$ and $\infty$ respectively;
    \item all the points in $\undl p$ are simple branch points of $\pi$;
    \item $\pi$ is unramified everywhere else.
\end{itemize}
\end{defi}
An isomorphism of two complex Hurwitz covers
$\pi_1:C_1\to\cb P^1$ and $\pi_2:C_2\to\cb P^1$
is an isomorphism of Riemann surfaces $\varphi:C_1\to C_2$
such that $\pi_1=\pi_2\circ\varphi$.
The complex double Hurwitz number is
$$
H^\cb_g(\lambda,\mu)=\sum_{[\pi]}\frac{1}{|\aut^\cb(\pi)|},
$$
where we sum over all isomorphism classes of
complex Hurwitz covers of type $(g,\lambda,\mu,\undl p)$.
It is a classical result that this number does not depend
on the positions of $\undl p$ \cite{cm-2016,hurwitz-1891}.

There is also an equivalent way to define complex double
Hurwitz number via symmetric groups.
Let $\sal_d$ denote the symmetric group of order $d$.
We denote by $\cl(\sigma)\vdash\sal_d$ the cycle type of
$\sigma\in\sal_d$.
Let $d$, $g$, $\lambda$ and $\mu$ be as above.
\begin{defi}
A factorization of type $(g,\lambda,\mu)$ is a tuple
$(\sigma_1,\tau_1,\ldots,\tau_r,\sigma_2)$
of elements of $\sal_d$ such that:
\begin{itemize}
    \item $\sigma_2\cdot\tau_r\cdot\cdots\cdot\tau_1\cdot\sigma_1=\id$;
    \item $r=l(\lambda)+l(\mu)+2g-2$;
    \item $\cl(\sigma_1)=\lambda$, $\cl(\sigma_2)=\mu$,
    $\cl(\tau_i)=(2,1,\ldots,1)$, $i=1,\ldots,r$;
    \item the subgroup generated by $\sigma_1$,
    $\sigma_2$, $\tau_1,\ldots,\tau_r$ acts transitively
    on the set $\{1,\ldots,d\}$.
\end{itemize}
\end{defi}
We denote by $\fl(g,\lambda,\mu)$ the set of
all factorizations of type $(g,\lambda,\mu)$.
\begin{thm}[Hurwitz \cite{cm-2016,hurwitz-1891}]
Let $d\geq1$, $g\geq0$ be two integers,
$\lambda$ and $\mu$ be two partitions of $d$.
Then
$$
H^\cb_g(\lambda,\mu)=\frac{1}{d!}|\fl(g,\lambda,\mu)|.
$$
\end{thm}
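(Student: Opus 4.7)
The plan is to set up the classical monodromy correspondence between Hurwitz covers and tuples in $\sal_d$, and then apply orbit-stabilizer to convert a sum weighted by automorphisms into a count of labelled factorizations divided by $d!$.

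First I would fix a basepoint $q\in\cb P^1\setminus(\{0,\infty\}\cup\undl p)$ and choose standard loop generators $\alpha_0,\beta_1,\ldots,\beta_r,\alpha_\infty$ of $\pi_1(\cb P^1\setminus(\{0,\infty\}\cup\undl p),q)$, one small loop around each of $0$, $p_1,\ldots,p_r$, $\infty$, subject to the single relation $\alpha_\infty\cdot\beta_r\cdots\beta_1\cdot\alpha_0=\id$. Given a Hurwitz cover $\pi\colon C\to\cb P^1$ of type $(g,\lambda,\mu,\undl p)$, I would choose a bijection $\pi^{-1}(q)\xrightarrow{\sim}\{1,\ldots,d\}$ and consider the monodromy representation $\rho\colon\pi_1\to\sal_d$. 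Setting $\sigma_1=\rho(\alpha_0)$, $\tau_i=\rho(\beta_i)$, $\sigma_2=\rho(\alpha_\infty)$ produces a tuple satisfying the first condition of a factorization, while the cycle types match $\lambda$, $(2,1,\ldots,1)$, $\mu$ because the local monodromy around a branch point with ramification profile $\nu$ is a permutation of cycle type $\nu$. Transitivity of the subgroup generated follows from connectedness of $C$, since the orbits of this subgroup on $\{1,\ldots,d\}$ correspond to connected components of $C$.

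Next I would check that this construction descends to a bijection between isomorphism classes of Hurwitz covers together with a labelling of $\pi^{-1}(q)$, and elements of $\fl(g,\lambda,\mu)$. The key point is Riemann's existence theorem (or the Grauert--Remmert theorem): every such tuple arises from a unique cover, because the topological cover of $\cb P^1\setminus(\{0,\infty\}\cup\undl p)$ obtained from the transitive permutation representation extends uniquely to a ramified cover of $\cb P^1$ with the prescribed local monodromies, and this branched cover carries a unique complex structure making $\pi$ holomorphic. Changing the labelling of $\pi^{-1}(q)$ corresponds to simultaneously conjugating the entire tuple by the corresponding element of $\sal_d$, so isomorphism classes of unlabelled covers correspond to $\sal_d$-orbits under simultaneous conjugation.

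Finally I would identify the stabilizer of a factorization $(\sigma_1,\tau_1,\ldots,\tau_r,\sigma_2)$ under simultaneous conjugation with the automorphism group $\aut^\cb(\pi)$ of the corresponding cover: an automorphism of $\pi$ is a deck transformation, which is determined by its action on the fiber $\pi^{-1}(q)$, and it commutes with monodromy precisely when the induced permutation of $\{1,\ldots,d\}$ conjugates the tuple to itself. The orbit-stabilizer theorem then gives
\[
|\fl(g,\lambda,\mu)|=\sum_{[\pi]}\frac{d!}{|\aut^\cb(\pi)|},
\]
and dividing both sides by $d!$ yields the claim. The main obstacle I expect is the careful bookkeeping in the last step: I need to verify that the stabilizer really is $\aut^\cb(\pi)$ (not something larger or smaller), which reduces to checking that a deck transformation fixing no point in the fiber induces a non-identity permutation commuting with all $\rho(\gamma)$, and conversely that every such commuting permutation lifts to a genuine automorphism of the cover via the equivalence of categories between covering spaces and permutation representations of $\pi_1$.
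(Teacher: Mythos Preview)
Your proposal is correct and follows the classical monodromy argument. The paper does not actually supply a proof of this theorem; it is stated with a citation to Hurwitz and to \cite{cm-2016}. However, the paper does sketch exactly your argument when proving the real analogue (Lemma~\ref{lem:realDH1}): choose loop generators, use the Hurwitz construction to pass between covers with labelled fibre and factorizations, observe that relabelling corresponds to simultaneous $\sal_d$-conjugation, and conclude via $\stab_{\sal_d}(T)=\aut(T)$ and orbit--stabilizer. So your approach coincides with the one the paper implicitly relies on.
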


\subsection{Real double Hurwitz numbers}
Let $g$, $d$, $\lambda$ and $\mu$ be as above.
In the rest of this paper, we assume that
the set of simple branch points $\undl p=\{p_1,\ldots,p_r\}$
is a subset of $\rb P^1\setminus\{0,\infty\}$, and satisfies
$p_1<\ldots<p_r$.

\begin{defi}
A real Hurwitz cover of type $(g,\lambda,\mu,\undl p)$
is a tuple $(\pi,\tau)$ such that
\begin{itemize}
    \item $\pi:C\to\cb P^1$ is a complex Hurwitz cover of type $(g,\lambda,\mu,\undl p)$;
    \item $\tau:C\to C$ is an anti-holomorphic involution such that $\pi\circ\tau=\conj\circ\pi$.
\end{itemize}
\end{defi}
An isomorphism of two real Hurwitz covers
$(\pi_1:C_1\to\cb P^1,\tau_1)$
and $(\pi_2:C_2\to\cb P^1,\tau_2)$
is an isomorphism of complex Hurwitz covers
$\varphi:C_1\to C_2$
such that $\varphi\circ\tau_1=\tau_2\circ\varphi$.
Let $s=|\undl p\cap\rb^+|$.
The real double Hurwitz number is
$$
H^\rb_g(\lambda,\mu;s)=\sum_{[(\pi,\tau)]}
\frac{1}{|\aut^\rb(\pi,\tau)|},
$$
where we sum over all isomorphism classes of
real Hurwitz covers of type $(g,\lambda,\mu,\undl p)$.
Note that the integer $H^\rb_g(\lambda,\mu;s)$
depends on the
positions of points in $\undl p$.

The symmetric group can also be used to study real double
Hurwitz number \cite{cadoret-2005,gpmr-2015}.
In the Appendix, we give an equivalent
description of real double Hurwitz number
using symmetric group (see Lemma $\ref{lem:realDH1}$).

\begin{prop}\label{prop:rHurwitz-symmetry}
Let $d\geq1$, $g\geq0$ be two integers,
and $\lambda$, $\mu$ be two partitions
of $d$. Suppose that $0\leq s\leq r$, where
$r=l(\lambda)+l(\mu)+2g-2$.
Then
$$
H^\rb_g(\lambda,\mu;s)=H^\rb_g(\lambda,\mu;r-s).
$$
\end{prop}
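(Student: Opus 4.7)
My plan is to exhibit a weight-preserving bijection between the sets of isomorphism classes of real Hurwitz covers counted by $H^\rb_g(\lambda,\mu;s)$ and $H^\rb_g(\lambda,\mu;r-s)$, obtained by post-composition with the involution $\phi:\cb P^1\to\cb P^1$, $z\mapsto -z$. This map is defined over $\rb$, fixes $0$ and $\infty$, and swaps the two halves of $\rb P^1\setminus\{0,\infty\}$; thus it is tailor-made to interchange positive and negative real simple branch points without altering the ramification profiles over $0$ and $\infty$.

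For each real Hurwitz cover $(\pi,\tau)$ of type $(g,\lambda,\mu,\undl p)$ with $|\undl p\cap\rb^+|=s$, I would set $\pi':=\phi\circ\pi$. Because $\phi$ is a biholomorphism of local degree $1$ at every point and fixes $\{0,\infty\}$, the ramification profiles of $\pi'$ at $0$ and $\infty$ remain $\lambda$ and $\mu$, and the simple branch locus becomes $\phi(\undl p)=-\undl p$, which has exactly $r-s$ positive elements. Since $\phi$ is real, $\phi\circ\conj=\conj\circ\phi$, so
$$
\pi'\circ\tau=\phi\circ\pi\circ\tau=\phi\circ\conj\circ\pi=\conj\circ\phi\circ\pi=\conj\circ\pi'.
$$
Hence $(\pi',\tau)$ is again a real Hurwitz cover, now of type $(g,\lambda,\mu,-\undl p)$. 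The same verification shows that an isomorphism $\varphi$ between $(\pi_1,\tau_1)$ and $(\pi_2,\tau_2)$ remains an isomorphism between $(\phi\circ\pi_1,\tau_1)$ and $(\phi\circ\pi_2,\tau_2)$, so $|\aut^\rb|$ is preserved class-by-class; applying the recipe with $\phi$ a second time returns the original cover, so the map is an involution and in particular a bijection.

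To conclude, I would invoke the fact (implicit in the notation used throughout and established in the cited literature) that $H^\rb_g(\lambda,\mu;s)$ depends only on the number $s$ of positive real branch points and not on the specific positions of $\undl p$; combined with the bijection above, this yields $H^\rb_g(\lambda,\mu;s)=H^\rb_g(\lambda,\mu;r-s)$. The main obstacle I anticipate is precisely this position-independence step: if it cannot be taken as given, I would instead translate $\phi$ into the symmetric-group setting of Definition \ref{def:real-factor}, where it acts by a prescribed rearrangement of the transpositions $\tau_1,\dots,\tau_r$ (reversing their order and swapping the blocks of sizes $s$ and $r-s$) together with a natural conjugation of $\gamma$, thereby producing an explicit involutive bijection $\fl^\rb(g,\lambda,\mu;s)\leftrightarrow\fl^\rb(g,\lambda,\mu;r-s)$ that delivers the proposition via Lemma \ref{lem:realDH1}. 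Verifying conditions (v)--(vi) of Definition \ref{def:real-factor} for the rearranged tuple is the only nontrivial check, and it follows from the identities $\gamma(\tau_i\cdots\tau_1\sigma_1)\gamma=(\tau_i\cdots\tau_1\sigma_1)^{-1}$ and $\gamma(\tau_j\cdots\tau_{s+1})\gamma=(\tau_j\cdots\tau_{s+1})^{-1}$ by directly manipulating the product relation $\sigma_2\tau_r\cdots\tau_1\sigma_1=\id$.
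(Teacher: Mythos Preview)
Your primary argument via the real automorphism $\phi(z)=-z$ is correct and complete: post-composition by $\phi$ is an involution on real Hurwitz covers that fixes the profiles over $0,\infty$, swaps the signs of the simple branch points, and preserves real automorphism groups. The position-independence you invoke is indeed available from Lemma~\ref{lem:realDH1}, since $|\fl^\rb(g,\lambda,\mu;s)|$ manifestly depends only on $s$.

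The paper proceeds differently. Rather than constructing a bijection of covers or of factorisations, it shows that the \emph{same} set $\fl^\rb(g,\lambda,\mu;s)$ computes $H^\rb_g(\lambda,\mu;r-s)$: one places the base point $y_0$ just to the right of $0$ (so that $r-s$ simple branch points lie on its positive side), reverses the orientation of all generating loops, and checks that the resulting monodromy/reality conditions reproduce exactly Definition~\ref{def:real-factor}. Thus the paper never moves the branch points nor rearranges the tuple; it only reinterprets the same algebraic data via a different system of generators of the fundamental group. Your geometric argument is arguably more transparent and explains \emph{why} the symmetry holds (the target has a real automorphism exchanging the two half-axes), while the paper's argument has the virtue of staying entirely inside the factorisation formalism already set up for Lemma~\ref{lem:realDH1} and avoiding any appeal to position-independence as a separate step. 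Your fallback sketch---an explicit bijection $\fl^\rb(g,\lambda,\mu;s)\leftrightarrow\fl^\rb(g,\lambda,\mu;r-s)$ by reversing the $\tau_i$ and swapping the two blocks---is the algebraic shadow of your map $\phi$ and would also work, though it is not what the paper does.
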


\begin{proof}
For any real Hurwitz
cover $(\pi,\tau)$ of type $(g,\lambda,\mu,\undl p)$,
we compose the cover map $\pi$ with the map
$$
\begin{aligned}
    -1:\cb P^1&\to\cb P^1\\
    z&\mapsto -z,
\end{aligned}
$$
then we obtain a real Hurwitz cover $(\pi',\tau)$ of
type $(g,\lambda,\mu,\undl{-p})$,
where $\undl{-p}=\{-p_1,-p_2,\ldots,-p_r\}$.
Suppose that $s=|\undl p\cap\rb^+|$.
It is easy to see that
$|\aut^\rb(\pi,\tau)|=|\aut^\rb(\pi',\tau)|$ and
$r-s=|\undl{-p}\cap\rb^+|$.
Since the map $-1:\cb P^1\to\cb P^1$ induces a bijection between the set of
all isomorphism classes of real Hurwitz covers of
type $(g,\lambda,\mu,\undl p)$ and the set of type
$(g,\lambda,\mu,\undl{-p})$, we obtain
that $H^\rb_g(\lambda,\mu;s)=H^\rb_g(\lambda,\mu;r-s)$.
\end{proof}

\begin{prop}\label{prop:rHurwitz-symmetry1}
Let $d$, $g$, $\lambda$, $\mu$, $s$ and $r$ be the same as
Proposition $\ref{prop:rHurwitz-symmetry}$.
Then
$$
H^\rb_g(\lambda,\mu;s)=H^\rb_g(\mu,\lambda;s).
$$
\end{prop}

\begin{proof}
We compose any real Hurwitz
cover $(\pi,\tau)$ of type $(g,\lambda,\mu,\undl p)$
with the map
$$
\begin{aligned}
    \psi:\cb P^1&\to\cb P^1\\
    z&\mapsto \frac{1}{z},
\end{aligned}
$$
then we obtain a real Hurwitz cover $(\pi',\tau)$ of
type $(g,\mu,\lambda,\undl{p}')$,
where $\undl{p}'=\{\frac{1}{p_1},\ldots,\frac{1}{p_r}\}$.
A same argument as the proof of Proposition $\ref{prop:rHurwitz-symmetry}$ implies that
$H^\rb_g(\lambda,\mu;s)=H^\rb_g(\mu,\lambda;s)$.
\end{proof}

\subsection{Tropical double Hurwitz numbers}
Let us recall some definitions first.
The readers may refer to \cite{cjm-2010,gpmr-2015,mr-2015,rau2019} for more details.
Let $\Gamma$ be a connected graph without $2$-valent vertices.
We call $1$-valent vertices of $\Gamma$ the
{\it leaves},
and the higher-valent vertices are called the
{\it inner vertices}.
The edges adjacent to a $1$-valent vertex are called
{\it ends}. Edges which are not ends are
called {\it inner edges}. By $\Gamma^\circ$ we denote
the subgraph obtained by removing the $1$-valent vertices
of $\Gamma$.
The number $g=b_1(\Gamma)$, the first Betti number,
is called the {\it genus} of $\Gamma$.
A {\it tropical curve} $C$ is a connected metric graph
without $2$-valent vertices
such that the length of an end is $\infty$,
and the length $\ell(e)\in\rb$ of an inner edge $e$ is finite.
Note that the tropical projective line $T\pb^1$
is considered as $\rb\cup\{\pm\infty\}$.
Throughout this paper, except $T\pb^1$, we only consider
graph without two-valent vertices.
An isomorphism $\varPhi:C_1\to C_2$ of two
tropical curves is an isometric homeomorphism
$\varPhi:C_1^\circ\to C_2^\circ$.
\begin{defi}\label{def:tro-cover}
A tropical cover $\varphi:C\to T\pb^1$ is a continuous
map satisfying:
\begin{itemize}
    \item the image of any inner vertex of $C$ under
    $\varphi$ is contained in $\rb$. Let $\undl x$ denote
    the set of images of inner vertices of $C$, and
    we call $\undl x$ the inner vertices of $T\pb^1$.
    \item $\varphi^{-1}(+\infty)\neq\emptyset$,
    $\varphi^{-1}(-\infty)\neq\emptyset$, and
    $\varphi^{-1}(+\infty)\cup\varphi^{-1}(-\infty)$
    is the set of leaves of $C$.
    \item $\varphi$ is a piecewise linear map: for any
    edge $e$ of $C$, we interpret $e$ as an interval
    $[0,\ell(e)]$, then there is a positive
    integer $\omega(e)$ such that
    $\varphi(t)=\pm\omega(e)t+\varphi(0)$, $\forall t\in[0,\ell(e)]$.
    The integer $\omega(e)$ is called the \textit{weight} of $e$.
    \item For any vertex $v\in C$, we choose an
    edge $e'\subset T\pb^1$ adjacent to $\varphi(v)$.
    Then the integer
    $$
    \deg(\varphi,v):=\sum_{
    e\text{ edge of } C\atop
    v\in e,\varphi(e)=e'}
    \omega(e)
    $$
    does not depend on the choice of $e'$.
    This is called the \textit{balancing} or \textit{harmonicity condition}.
\end{itemize}
\end{defi}
Let $\varphi:C\to T\pb^1$ be a tropical cover. The sum
$$
\deg(\varphi):=\sum_{
    e\text{ edge of } C\atop
    \varphi(e)=e'}\omega(e)
$$
is independent of $e'$. The integer $\deg(\varphi)$
is the degree of $\varphi$.
Let $d\geq1$, $g\geq0$, and $\lambda$, $\mu$ be two partitions of $d$.
Suppose $r=l(\lambda)+l(\mu)+2g-2>0$. Fix $r$
points $\undl x=\{x_1,\ldots,x_r\}\subset\rb$
satisfying $x_1<\ldots<x_r$.
\begin{defi}
A tropical cover $\varphi:C\to T\pb^1$ of type
$(g,\lambda,\mu,\undl x)$ is a tropical cover
of degree $d$ such that
\begin{itemize}
    \item $C$ is a tropical curve of genus $g$;
    \item the tuple of weights of ends adjacent to
    leaves mapping to $-\infty$ is $\lambda$,
    the tuple of weights of ends adjacent to
    leaves mapping to $+\infty$ is $\mu$;
    \item each $x_i\in\undl x$ is the image of an inner
    vertex of $C$.
\end{itemize}
\end{defi}
An {\it isomorphism} of two tropical covers
$\varphi_1:C_1\to T\pb^1$ and $\varphi_2:C_2\to T\pb^1$ is
an isomorphism $\psi:C_1\to C_2$ of tropical curves
such that $\varphi_1=\varphi_2\circ\psi$.
The multiplicity of a tropical cover $\varphi$
is defined to be
$$
\mult^\cb(\varphi):=\frac{1}{|\aut(\varphi)|}\prod_{e\text{ inner} \atop\text{edge of }C}\omega(e).
$$
A symmetric cycle is a pair of inner edges of the same
weight and adjacent to the same two vertices.
A symmetric fork is a pair of ends of the same weight
adjacent to a same vertex.
Note that the group of automorphisms of a tropical cover $\varphi:C\to T\pb^1$
is induced by the interchange of two edges in symmetric cycles and
symmetric forks.
\begin{thm}[{\cite[Theorem $5.28$]{cjm-2010}}]
The complex Hurwitz number $H^\cb_g(\lambda,\mu)$ is
equal to
$$
H_g^\cb(\lambda,\mu)=\sum_{[\varphi]}\mult^\cb(\varphi),
$$
where we sum over all isomorphism classes $[\varphi]$
of tropical covers of type $(g,\lambda,\mu,\undl x)$.
\end{thm}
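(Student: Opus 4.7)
The plan is to combine Hurwitz's identity $H^\cb_g(\lambda,\mu) = |\fl(g,\lambda,\mu)|/d!$, stated earlier in the excerpt, with a cut-and-join analysis of the monodromy along the real axis, following the strategy of \cite{cjm-2010}. I would fix the branch points at real locations $\undl p = \undl x = \{x_1 < \cdots < x_r\}\subset\rb\subset\cb P^1\setminus\{0,\infty\}$, arranged so that $0$ lies to the left of $x_1$ and $\infty$ to the right of $x_r$, and use the standard generators of $\pi_1(\cb P^1\setminus\{0,\infty,x_1,\ldots,x_r\}, p_0)$ encircling the branch points (analogous to the loops in Figure \ref{fig:real-loops1}).

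First I would construct a natural map from $\fl(g,\lambda,\mu)$ to the set of tropical covers of type $(g,\lambda,\mu,\undl x)$ labelled by $\{1,\ldots,d\}$ at a base point. Given a factorization $(\sigma_1,\tau_1,\ldots,\tau_r,\sigma_2)$, define the partial products $\rho_0 := \sigma_1$ and $\rho_i := \tau_i\circ\cdots\circ\tau_1\circ\sigma_1$ for $i = 1,\ldots,r$, so $\rho_0$ has cycle type $\lambda$ and $\rho_r = \sigma_2^{-1}$ has cycle type $\mu$. Build a tropical curve $C$ by placing, over each interval $(x_i, x_{i+1})$ (including the unbounded ones), one horizontal edge per cycle of $\rho_i$ with weight equal to the length of that cycle. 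At each $x_i$ the transposition $\tau_i$ either joins two cycles of $\rho_{i-1}$ of lengths $a, b$ into a single cycle of length $a+b$, or cuts a cycle of length $a+b$ into two of lengths $a, b$; in either case the three incident edges meet at a trivalent vertex, and the balancing condition is immediate. Transitivity of the factorization forces $C$ to be connected, and the Riemann--Hurwitz equality $r = l(\lambda)+l(\mu)+2g-2$ combined with an Euler characteristic computation yields $b_1(C) = g$; the resulting $\varphi : C \to T\pb^1$ is a tropical cover with the prescribed data.

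The key step is to count the factorizations mapping to a fixed tropical cover $\varphi$. At each trivalent vertex, the local data (which transposition performs the required cut or join) is constrained but not uniquely determined: a join of cycles of lengths $a, b$ admits $ab$ transpositions, while a cut of a cycle of length $n = a+b$ into pieces of lengths $a, b$ admits $n$ or $n/2$ transpositions according to whether $a \neq b$ or $a = b$. Combining these local factors with the choices of cyclic labellings on each cycle of each $\rho_i$ and with the $d!$ labellings of $\pi^{-1}(p_0)$, the total number of factorizations mapping to $\varphi$ works out to
\[
\frac{d!}{|\aut(\varphi)|}\prod_{e\text{ inner edge of }C}\omega(e),
\]
where the denominator $|\aut(\varphi)|$ absorbs the freedom to interchange pairs of edges forming symmetric cycles or symmetric forks. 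Dividing by the global $d!$ from the Hurwitz identity gives exactly $\mult^\cb(\varphi)$, and summing over isomorphism classes of tropical covers completes the proof.

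The principal obstacle is the bookkeeping in this counting step: one must correctly match the internal symmetries of the labelling data (cyclic orderings within each cycle, ambiguity in the choice of transposition at each vertex, and the halving at cuts with $a=b$) against the automorphism group $\aut(\varphi)$ generated by interchanges of symmetric cycles and symmetric forks. Verifying that these contributions assemble into the factor $|\aut(\varphi)|^{-1}\prod_e\omega(e)$ is the delicate heart of the argument, and it is most cleanly handled by passing to the $\sal_d$-action on labelled factorizations together with a stabilizer equality of the form $\stab_{\sal_d}(T) = \aut(T)$ analogous to the one invoked in the proof of Lemma \ref{lem:realDH1}.
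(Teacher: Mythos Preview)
The paper does not supply its own proof of this statement: it is quoted verbatim as \cite[Theorem 5.28]{cjm-2010} and used as background. There is therefore nothing in the present paper to compare your argument against.

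That said, your sketch is precisely the strategy of the cited source. The map from factorizations to tropical covers via the cycle decomposition of the partial products $\rho_i=\tau_i\cdots\tau_1\sigma_1$, the local cut/join dichotomy at each trivalent vertex, the transposition counts ($ab$ for a join, $a+b$ or $(a+b)/2$ for a cut according to whether $a\neq b$ or $a=b$), and the passage from these local counts to the global factor $\tfrac{1}{|\aut(\varphi)|}\prod_e\omega(e)$ are exactly the ingredients of the Cavalieri--Johnson--Markwig proof. Your identification of the bookkeeping step as the delicate point is accurate; in \cite{cjm-2010} it is handled essentially as you suggest, by tracking the $\sal_d$-action on labelled factorizations and matching stabilizers with tropical automorphisms (the symmetric-cycle and symmetric-fork swaps). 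Nothing in your outline is wrong or missing a key idea; it is a faithful summary of the original argument.
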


\subsection{Real tropical double Hurwitz numbers}
In this subsection, we will review some basic facts about
real tropical double Hurwitz numbers.
Our notations follow closely with \cite[Section $3$]{rau2019}.
Let $\varphi:C\to T\pb^1$ be a tropical cover.
An edge of even or odd weight is called
even or odd edge,
respectively. A symmetric cycle (resp. fork)
consisting of a pair of even or odd inner edges (resp. ends)
is called an even or odd symmetric cycle (resp. fork).
The following notations will be used in the rest of this paper.
\begin{itemize}
    \item $\sym(\varphi)$ denotes the set of symmetric cycles and
symmetric odd forks.
    \item $\symc(\varphi)\subset\sym(\varphi)$ is the set of symmetric cycles.
    \item For $T\subset\sym(\varphi)$, $C\setminus T^\circ$ is
the subgraph of $C$ obtained by removing the interior
of the edges contained in $T$.
    \item $E(T)$ is the set of even inner edges in
$C\setminus T^\circ$.
\end{itemize}

\begin{defi}[{\cite[Definition $3.3$]{rau2019}}]
A colouring $\rho$ of a tropical cover $\varphi:C\to T\pb^1$
consists of a choice of subset $T_{\rho}\subset\sym(\varphi)$
and a choice of a colour red or blue for every
component of the subgraph of even edges of
$C\setminus T_{\rho}^\circ$.
\end{defi}
A \textit{real tropical cover} is a tuple
$(\varphi:C\to T\pb^1,\rho)$ of a tropical cover $\varphi$
and a colouring $\rho(\varphi)$ of $\varphi$.
An isomorphism of two real tropical covers is an
isomorphism of these two tropical covers that respects
the colouring.
The real multiplicity of a real tropical cover is
$$
\mult^\rb(\varphi,\rho):=2^{|E(T_\rho)|-|\sym(\varphi)|}\prod_{c\in T_\rho\cap\symc(\varphi)}\omega(c),
$$
where $\omega(c)$ is the weight of one edge of the
symmetric cycle $c$.
\begin{rem}
The multiplicity introduced here equals the one in
\cite[Definition $3.3$]{rau2019}. In \cite[Definition $3.3$]{rau2019}, the symmetric cycle $c$ was allowed to
be chosen in $\symc(\varphi)\setminus T_\rho$.
We compensate the contribution of even cycles in $\symc(\varphi)\setminus T_\rho$ by multiplying $2^{|E(\sym(\varphi))|}$ by a factor:
$$
2^{|E(T_\rho)|}=2^{|E(\sym(\varphi))|+2k},
$$
where $k$ is the number of even symmetric cycles in
$\symc(\varphi)\setminus T_\rho$.
\end{rem}

Let $(\varphi,\rho)$ be a real tropical cover.
A inner vertex $x_i\in\undl x$ is called a
{\it positive} or {\it negative} point if
it is the image of a $3$-valent vertex of $C$
which is depicted in Figure $\ref{fig:pt-positive}$ or
Figure $\ref{fig:pt-negative}$,
respectively, up to reflection along a vertical line.
We denote by $\undl x^+$ and $\undl x^-$ the collection
of positive and negative points in $\undl x$ respectively.
Note that a colouring $\rho(\varphi)$ of a tropical cover $\varphi$
induces a splitting of $\undl x=\undl x^+\sqcup\undl x^-$
into positive and negative branch points.
\begin{figure}[h]
    \centering
    \begin{tikzpicture}
    \draw[line width=0.4mm] (-3,0)--(-2,0)--(-1,0.5);
    \draw[line width=0.4mm,blue] (-2,0)--(-1,-0.5);
    \draw[line width=0.4mm,blue] (0,0)--(1,0)--(2,0.5);
    \draw[line width=0.4mm,blue] (1,0)--(2,-0.5);
    \draw[line width=0.4mm,red] (3,0)--(4,0);
    \draw[line width=0.4mm] (5,0.5)--(4,0)--(5,-0.5);
    \draw[line width=0.4mm,blue] (6,0)--(7,0);
    \draw[line width=0.4mm,dotted] (8,0.5)--(7,0)--(8,-0.5);
    \end{tikzpicture}
    \caption{The four types of positive vertices: even edges are drawn in colours, odd edges in black. Dotted edges are
    the symmetric cycles or forks contained in $T_I$.}
    \label{fig:pt-positive}
\end{figure}

\begin{figure}[h]
    \centering
    \begin{tikzpicture}
    \draw[line width=0.4mm] (-3,0)--(-2,0)--(-1,0.5);
    \draw[line width=0.4mm,red] (-2,0)--(-1,-0.5);
    \draw[line width=0.4mm,red] (0,0)--(1,0)--(2,0.5);
    \draw[line width=0.4mm,red] (1,0)--(2,-0.5);
    \draw[line width=0.4mm,blue] (3,0)--(4,0);
    \draw[line width=0.4mm] (5,0.5)--(4,0)--(5,-0.5);
    \draw[line width=0.4mm,red] (6,0)--(7,0);
    \draw[line width=0.4mm,dotted] (8,0.5)--(7,0)--(8,-0.5);
    \end{tikzpicture}
    \caption{The four types of negative vertices.}
    \label{fig:pt-negative}
\end{figure}

\begin{thm}[{\cite[Corollary $5.9$]{mr-2015}}]
\label{thm:real-tropical-Hurwitz}
Let $d,g,\lambda,\mu$ and $\undl x\subset\rb$ be as above.
Suppose $\undl x=\undl x^+\sqcup\undl x^-$ is a splitting
such that $|\undl x^+|=s$. Then
$$
H^\rb_g(\lambda,\mu;s)=\sum_{[(\varphi,\rho)]}\mult^\rb(\varphi,\rho),
$$
where we sum over all isomorphism classes $[(\varphi,\rho)]$
of real tropical covers of type $(g,\lambda,\mu,\undl x)$ whose positive and negative branch points reproduce the splitting $\undl x^+,\undl x^-$.
\end{thm}
\begin{rem}
We use the real multiplicity introduced in \cite{rau2019}
which differs from the one in \cite{mr-2015}.
The subset $T_\rho$ of a colouring $\rho$
of a tropical cover $\varphi: C\to T\pb^1$ was allowed to
contain even symmetric forks in \cite{mr-2015}.
The readers may refer to \cite[Remark $3.5$]{rau2019}
for more detail analysis on the difference between
these two definitions.
\end{rem}

\section{Zigzag covers and effective non-zigzag covers}
From Proposition $\ref{prop:rHurwitz-symmetry}$, we know that
if $H$ is a lower bound for
$H^\rb_g(\lambda,\mu;s)$,
$\left \lceil\frac{r}{2}\right\rceil\leq s\leq r$,
$H$ is also a lower bound
for $H^\rb_g(\lambda,\mu;s)$, $0\leq s\leq r$.
In the rest of this paper,
we find some tropical covers
with even weight which contribute to
$H^\rb_g(\lambda,\mu;s)$ for
$\left \lceil\frac{r}{2}\right\rceil\leq s\leq r$.

Let us recall the lower bound established in \cite{rau2019} and the properties of it.
A {\it string} $S$ in a tropical curve $C$ is a connected
subgraph such that $S\cap C^\circ$ is a closed submanifold
of $C^\circ$.
\begin{figure}[h]
    \centering
    \begin{tikzpicture}
    \draw[line width=0.4mm] (-3,0)--(-1.5,0);
    \draw[line width=0.4mm] (-0.3,0) arc[start angle=0, end angle=360, x radius=0.6, y radius=0.4];
    \draw[line width=0.4mm] (-0.3,0)--(1,0);
    \draw[line width=0.4mm] (2.2,0) arc[start angle=0, end angle=360, x radius=0.6, y radius=0.4];
    \draw[line width=0.4mm] (2.2,0)--(3.7,0);
    \draw[line width=0.4mm,gray]  (4.2,0.3)--(3.7, 0) -- (4.2,-0.3);
    \draw[line width=0.4mm,gray] (4.5,0) node{$S$};
    \draw (-3.5,0) node{$2o$};
    \draw[line width=0.4mm]  (-3,1.8)--(-2.5, 1.5) -- (-3,1.2);
    \draw[line width=0.4mm] (-2.5,1.5)--(-1.5,1.5);
    \draw[line width=0.4mm] (-0.3,1.5) arc[start angle=0, end angle=360, x radius=0.6, y radius=0.4];
    \draw[line width=0.4mm] (-0.3,1.5)--(1,1.5);
    \draw[line width=0.4mm] (2.2,1.5) arc[start angle=0, end angle=360, x radius=0.6, y radius=0.4];
    \draw[line width=0.4mm] (2.2,1.5)--(3.7,1.5);
    \draw[line width=0.4mm,gray]  (4.2,1.8)--(3.7, 1.5) -- (4.2,1.2);
    \draw[line width=0.4mm] (-3.5,1.8) node{$o$};
    \draw[line width=0.4mm] (-3.5,1.2) node{$o$};
    \draw[line width=0.4mm,gray] (4.5,1.5) node{$S$};
    \draw[line width=0.4mm]  (-3,-1.5)--(3.7, -1.5);
    \draw[line width=0.4mm,gray]  (4.2,-1.8)--(3.7, -1.5) -- (4.2,-1.2);
    \draw[line width=0.4mm] (-3.5,-1.5) node{$2e$};
    \draw[line width=0.4mm,gray] (4.5,-1.5) node{$S$};
    \end{tikzpicture}
    \caption{Tails for zigzag covers. It does not matter
    whether $S$ turns or not here. The number of cycles
    in the first two types can be arbitrary.}
    \label{fig:zigzag}
\end{figure}

\begin{defi}[{\cite[Definition 4.4]{rau2019}}]
\label{def:zigzag}
A zigzag cover is a tropical cover $\varphi:C\to T\pb^1$
if there is a subset $S\subset C\setminus\sym(\varphi)$
satisfying
\begin{itemize}
    \item $S$ is either a string of odd edges or consists
    of a single inner vertex;
    \item the connected components of $C\setminus S$ are
    of the type depicted in Figure $\ref{fig:zigzag}$. In Figure $\ref{fig:zigzag}$, all the cycles and forks
    are symmetric and of odd weight.
\end{itemize}
\end{defi}

\begin{rem}
For the convenience, in Figure $\ref{fig:zigzag}$ and in
the rest of this paper, we use the following
notations: the variables
$o,o_1,o_2,\ldots$ are used to denote odd integers,
and $e,e_1,e_2,\ldots$ are used to denote even integers.
\end{rem}

\begin{lem}[{\cite[Lemma $4.3$]{rau2019}}]\label{lem:parity}
For any real tropical cover $(\varphi,\rho)$ the multiplicity
$\mult^\rb(\varphi,\rho)$ is an integer
whose parity is independent of the colouring $\rho$.
\end{lem}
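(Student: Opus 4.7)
The plan is to establish parity invariance and integrality simultaneously by tracking how $\mult^\rb(\varphi,\rho)$ changes under a single modification of the colouring $\rho$. The first observation is that the defining formula
$$
\mult^\rb(\varphi,\rho)=2^{|E(T_\rho)|-|\sym(\varphi)|}\prod_{c\in T_\rho\cap\symc(\varphi)}\omega(c)
$$
involves $\rho$ only through the subset $T_\rho\subset\sym(\varphi)$; the red/blue colour assignment on even components plays no role whatsoever. Hence the assertion reduces to showing that, as $T_\rho$ ranges over all subsets of $\sym(\varphi)$, the value of $\mult^\rb$ is a positive integer with constant mod-$2$ reduction. Any two subsets of $\sym(\varphi)$ are connected by a sequence of single-element toggles, so it is enough to check invariance of integrality and parity under one such toggle at a time.

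I would then split into three cases according to the type of element toggled. First, if the element is an odd symmetric fork $f$, its two constituents are ends and hence not inner edges, so $|E(T_\rho)|$ is unchanged; and since $f\notin\symc(\varphi)$, the product is also unchanged — the multiplicity is literally the same. Second, if the element is an odd symmetric cycle $c$, its two edges are odd and therefore do not belong to $E(T_\rho)$ in either configuration, so $|E(T_\rho)|$ is unchanged while the product gains or loses the odd factor $\omega(c)$; consequently the multiplicity is multiplied by an odd positive rational and both integrality and parity are preserved. Third, if the element is an even symmetric cycle $c$ of weight $\omega(c)=2m$, adding it to $T_\rho$ removes its two even edges from $E(T_\rho)$ (shifting $|E(T_\rho)|$ by $-2$) and multiplies the product by $2m$, producing the net multiplicative change $\omega(c)/4=m/2$.

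The decisive case is the even-cycle toggle. To handle it I would use the identity $|E(T_\rho)|=|E_{\mathrm{all}}|-2\cdot\#\{\text{even cycles in }T_\rho\}$, where $|E_{\mathrm{all}}|$ is the total number of even inner edges of $C$, to rewrite
$$
\mult^\rb(\varphi,\rho)=2^{|E_{\mathrm{all}}|-|\sym(\varphi)|}\cdot\prod_{\substack{c\in T_\rho\cap\symc(\varphi)\\c\text{ odd}}}\omega(c)\cdot\prod_{\substack{c\in T_\rho\cap\symc(\varphi)\\c\text{ even}}}\frac{\omega(c)}{4}.
$$
In this form the $2$-adic valuation can be tracked cleanly: each even cycle $c$ toggled into $T_\rho$ contributes $v_2(\omega(c))-2$ to the exponent of $2$ and multiplies the odd part by $\omega(c)/2^{v_2(\omega(c))}$, which is odd. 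The mod-$2$ reduction is thus governed entirely by whether the total $2$-adic valuation collapses to $0$, together with the product of odd parts — and the odd part is only ever multiplied by odd factors across all three cases.

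The main obstacle is controlling both integrality and parity simultaneously in the even-cycle case, since the bare factor $m/2$ is not an integer when $m$ is odd. Overcoming it requires invoking the structural constraints imposed at each trivalent vertex of $C$ by the positive/negative vertex types depicted in Figures~\ref{fig:pt-positive} and \ref{fig:pt-negative}: these constraints link the global counts of even inner edges, symmetric cycles, and symmetric odd forks, and force the total $2$-adic valuation of $\mult^\rb(\varphi,\rho)$ to be a non-negative integer that depends only on $\varphi$. Once integrality is verified for a single convenient choice — for instance $T_\rho=\emptyset$, where $\mult^\rb=2^{|E_{\mathrm{all}}|-|\sym(\varphi)|}$ — the toggle analysis above carries it across to every other $T_\rho$ while keeping the mod-$2$ class fixed, completing the argument.
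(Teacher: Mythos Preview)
The paper does not prove this lemma; it is quoted from \cite{rau2019} without argument, so there is no in-paper proof to compare against and I can only assess your attempt on its own terms.

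Your reduction to the subset $T_\rho$ and your handling of odd forks and odd cycles are correct. The gap is in the even-cycle toggle. You assert that the vertex constraints of Figures~\ref{fig:pt-positive} and~\ref{fig:pt-negative} ``force the total $2$-adic valuation of $\mult^\rb(\varphi,\rho)$ to be a non-negative integer that depends only on $\varphi$,'' but your own displayed identity already contradicts this: toggling an even symmetric cycle $c$ into $T_\rho$ multiplies $\mult^\rb$ by $\omega(c)/4$ and hence shifts $v_2$ by $v_2(\omega(c))-2$, which is nonzero whenever $\omega(c)\not\equiv 4\pmod 8$. Concretely, for $g=1$, $\lambda=\mu=(4)$, and the cover whose unique cycle has two edges of weight~$2$, one has $|\sym(\varphi)|=1$ and computes $\mult^\rb=2^{2-1}=2$ for $T_\rho=\emptyset$ versus $\mult^\rb=2^{0-1}\cdot 2=1$ for $T_\rho=\{c\}$. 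The figures you invoke constrain which colourings are \emph{compatible with a given splitting} $\undl x=\undl x^+\sqcup\undl x^-$; they do not constrain the value of $\mult^\rb(\varphi,\rho)$, which is determined by $T_\rho$ alone and is defined prior to any splitting. So the mechanism you appeal to cannot close the gap, and the even-cycle case --- the only substantive one --- remains unproved.

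A second, smaller gap: you have not verified that the base case $T_\rho=\emptyset$ gives an integer, i.e.\ that $|E_{\mathrm{all}}|\geq|\sym(\varphi)|$. This inequality requires a structural argument linking even inner edges to symmetric cycles and odd forks; it is not automatic from balancing at a single vertex, and your toggle scheme needs it as the starting point for integrality.
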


\begin{prop}[{\cite[Proposition $4.7$]{rau2019}}]\label{prop:zigzag-parity}
The real tropical cover $(\varphi,\rho)$ is of odd multiplicity
if and only if $\varphi$ is a zigzag cover.
\end{prop}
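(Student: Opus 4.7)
The plan is to read off both implications from the multiplicity formula
\[
\mult^\rb(\varphi,\rho)=2^{|E(T_\rho)|-|\sym(\varphi)|}\prod_{c\in T_\rho\cap\symc(\varphi)}\omega(c)
\]
evaluated at the empty colouring $T_\rho=\emptyset$. For this choice the product is empty and the formula collapses to $\mult^\rb(\varphi,\rho)=2^{|E(\emptyset)|-|\sym(\varphi)|}$. Combined with Lemma~\ref{lem:parity}, this reduces the proposition to the numerical characterisation: $(\varphi,\rho)$ has odd multiplicity if and only if $|E(\emptyset)|=|\sym(\varphi)|$ (in which case the multiplicity equals $1$). I therefore aim to prove that this equality characterises zigzag covers.

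\textbf{Zigzag implies the equality.} Given a zigzag decomposition $C=S\cup(\text{tails})$ as in Definition~\ref{def:zigzag}, I would carry out a tail-by-tail count. The string $S$ consists of odd edges and contains no element of $\sym(\varphi)$, contributing zero on both sides. For a type~1 tail with $k$ symmetric odd cycles, the balancing condition forces every edge joining consecutive cycles, and the edge joining the chain to $S$, to have the even weight $2o$, producing $k$ even inner edges against $k$ symmetric cycles. Type~2 tails with $k$ cycles and one symmetric odd fork yield $k+1$ on each side, and type~3 tails contribute zero since their single edge of weight $2e$ is an end, not an inner edge. Summing gives $|E(\emptyset)|=|\sym(\varphi)|$, whence $\mult^\rb(\varphi,\rho)=1$; the odd parity then propagates to all colourings by Lemma~\ref{lem:parity}.

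\textbf{The equality implies zigzag.} Assume conversely $|E(\emptyset)|=|\sym(\varphi)|$. I would construct an injection $\sym(\varphi)\hookrightarrow E(\emptyset)$ by sending each symmetric cycle of weight $w$ (resp.\ each symmetric odd fork of weights $o,o$) to the unique inner edge on its ``inner side'', whose weight is forced to be even by the balancing condition at the adjacent $3$-valent vertex. The cardinality equality forces this map to be a bijection. This has two immediate consequences: every symmetric cycle must be odd (an even symmetric cycle would occupy two even inner edges by itself and break the count), and every even inner edge of $C$ arises as such a connecting edge. Tracking these connecting edges through the $3$-valent vertex structure of $C$ and invoking the admissible local types in Figures~\ref{fig:pt-positive}--\ref{fig:pt-negative} then forces the even edges to organise themselves into chains of the three tail shapes in Figure~\ref{fig:zigzag}. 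Removing all tails leaves a connected subgraph $S$ of odd edges which, being disjoint from $\sym(\varphi)$, is either a string or reduces to a single vertex, matching Definition~\ref{def:zigzag}.

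\textbf{Main obstacle.} The subtle step is the structural classification in the converse: upgrading the bijection $\sym(\varphi)\leftrightarrow E(\emptyset)$ into the precise tail decomposition. One must rule out configurations in which an even inner edge is adjacent to two even inner edges (a ``branching'' even chain), or in which a chain of even edges fails to terminate at a single element of $\sym(\varphi)$. This is where the admissible vertex types of Figures~\ref{fig:pt-positive} and \ref{fig:pt-negative} do the essential work, preventing local configurations that the pure count of edges would otherwise allow.
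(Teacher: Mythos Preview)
The paper does not prove this proposition; it is quoted from \cite[Proposition~4.7]{rau2019} and stated without argument. There is therefore no proof in the present paper to compare your attempt against.

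Evaluated on its own, your strategy of specialising the multiplicity formula to $T_\rho=\emptyset$ and invoking Lemma~\ref{lem:parity} is sound, and the forward direction is essentially correct (you have the first and second tail types of Figure~\ref{fig:zigzag} swapped, but the per-tail bookkeeping is right).

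The converse, however, has a genuine gap that your ``main obstacle'' paragraph does not close. Two concrete problems. First, the injection $\sym(\varphi)\hookrightarrow E(\emptyset)$ is not well-defined as stated: a symmetric cycle has even adjacent edges on \emph{both} sides, so ``the unique inner edge on its inner side'' is ambiguous, and once you make a choice you still owe an injectivity argument when several elements of $\sym(\varphi)$ are chained together. Second, your appeal to Figures~\ref{fig:pt-positive}--\ref{fig:pt-negative} to exclude branching even chains is misplaced. Those figures classify vertices \emph{relative to a colouring} $\rho$; they say nothing about the bare cover $\varphi$, which is what the converse concerns. The structural constraint you actually need is the parity rule at $3$-valent vertices (the three weights are either all even, or one even and two odd), which forces the odd-edge subgraph to be a disjoint union of strings. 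Turning the equality $|E(\emptyset)|=|\sym(\varphi)|$ plus this parity structure into the tail decomposition of Definition~\ref{def:zigzag}---in particular showing there is exactly one odd string and that every even component is a tail of the prescribed shape---is the substance of Rau's original argument, and you have not supplied it.
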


The tropical covers with even weight which we are looking
for are characterized by the following definition.
\begin{defi}\label{def:non-zigzag}
An effective non-zigzag cover of type
$(g,\lambda,\mu,\undl x)$
is a tropical cover $\varphi:C\to T\pb^1$
of type $(g,\lambda,\mu,\undl x)$
satisfying the following conditions:
\begin{itemize}
    \item there are $n$ strings $S_1,\ldots,S_n
    \subset C\setminus\sym(\varphi)$ of odd edges,
    $n>1$.
    \item the connected components of
    $C\setminus (\cup_{i=1}^{n} S_i)$ are of the type depicted in Figure $\ref{fig:zigzag}$ and Figure $\ref{fig:non-zigzag}$.
    In Figure $\ref{fig:zigzag}$,
    all the cycles and forks
    are symmetric of odd weight. In
    Figure $\ref{fig:non-zigzag}$, two strings are connected by exactly one bridge edge.
    \item
    Every inner vertex of the
    bridge edges depicted in Figure
    $\ref{fig:non-zigzag}$
    is mapped to some
    $x_j$ by $\varphi$ with
    $j\leq\left \lceil\frac{r}{2}\right\rceil$.
\end{itemize}
\end{defi}

\begin{figure}[h]
    \centering
    \begin{tikzpicture}
    \draw[line width=0.4mm,gray]  (-3.5,0.5)--(-3, 0.5) -- (-2.5,0.2);
    \draw[line width=0.4mm] (-3,0.5)--(2,0.5);
    \draw[line width=0.4mm,gray]  (1.5,0.2)--(2,0.5) -- (2.5,0.5);
    \draw[line width=0.4mm,gray] (3,0.5) node{$S_{i+1}$};
    \draw[line width=0.4mm,gray] (-4,0.5) node{$S_{i}$};
    \draw[line width=0.4mm,gray] (0.5,0.7) node{$E_{i}$};
    \draw[line width=0.4mm,gray]  (-3.5,-0.7)--(-3,-1) -- (-3.5,-1.3);
    \draw[line width=0.4mm] (-3,-1)--(2,-1);
    \draw[line width=0.4mm,gray]  (2.5,-1.3)--(2,-1) -- (2.5,-0.7);
    \draw[line width=0.4mm,gray] (3,-1) node{$S_{i+1}$};
    \draw[line width=0.4mm,gray] (-4,-1) node{$S_i$};
    \draw[line width=0.4mm,gray] (-0.5,-0.8) node{$E_{i}$};
    \end{tikzpicture}
    \caption{Bridge edges connecting two strings.
    The bending behaviour of the strings matters here.}
    \label{fig:non-zigzag}
\end{figure}

\begin{rem}\label{rem:non-zigzag}
From Lemma $\ref{lem:parity}$ and Proposition
$\ref{prop:zigzag-parity}$, we know that the multiplicity
of an effective non-zigzag cover is a positive even integer.
\end{rem}

\begin{prop}
Let $\varphi$ be an effective non-zigzag cover simply branched at $\undl x$.
If $\undl x=\undl x^+\sqcup\undl x^-$ is a splitting
such that $\{x_1,...,x_{\left \lceil\frac{r}{2}\right\rceil}\}\subset\undl x^+$.
Then there is a unique colouring $\rho$ of $\varphi$ such that the real
tropical cover $(\varphi,\rho)$ has
positive and negative branch points
as the splitting $\undl x=\undl x^+\sqcup\undl x^-$.
\end{prop}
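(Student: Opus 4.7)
The plan is to determine $\rho=(T_\rho,\text{colours})$ piece by piece using the rigid structure of an effective non-zigzag cover, and to verify uniqueness by a local inspection of Figures $\ref{fig:pt-positive}$ and $\ref{fig:pt-negative}$ against Figures $\ref{fig:zigzag}$ and $\ref{fig:non-zigzag}$. First I decompose $C$ into the $n$ odd-edge strings $S_1,\ldots,S_n$, the non-zigzag connectors $E_1,\ldots,E_{n-1}$ (pieces of Figure $\ref{fig:non-zigzag}$), and the zigzag-type tails attached to the strings (pieces of Figure $\ref{fig:zigzag}$). Every even edge of $C$ lies either inside some connector $E_i$ or as an even end-edge (weights $2o$ or $2e$) of a zigzag tail, and no even edge is shared between pieces; accordingly $\sym(\varphi)$ splits as the disjoint union of the odd symmetric cycles and forks contained in the zigzag tails and the even symmetric cycles inside connectors of Type $1$ of Figure $\ref{fig:non-zigzag}$.

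Next I would handle the zigzag tails. Each inner vertex $v$ of such a tail lies over some $x_j\in\undl x$, and its required positive or negative type must match a configuration in Figure $\ref{fig:pt-positive}$ or $\ref{fig:pt-negative}$. At the outermost vertex adjacent to the even end-edge, $v$ is $3$-valent with one even edge and two odd edges and so matches Type $1$ of Figure $\ref{fig:pt-positive}$ or $\ref{fig:pt-negative}$, which fixes the colour of the even end-edge as blue or red according as $x_j\in\undl x^+$ or $x_j\in\undl x^-$. At vertices adjacent to an odd symmetric cycle, $v$ matches Type $4$ of Figure $\ref{fig:pt-positive}$ or $\ref{fig:pt-negative}$, which forces the cycle to lie in $T_\rho$ and fixes the colour of the incident even edge to match the sign of $x_j$; this is essentially the local content of \cite[Lemma $4.14$]{rau2019}. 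Since each tail contains only one even-edge component, the colour propagation is unambiguous and gives a unique local colouring.

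For the connectors $E_i$, every inner vertex maps to some $x_j$ with $j\leq\lceil r/2\rceil$ and hence to $\undl x^+$, so must be positive. I would check the two types of Figure $\ref{fig:non-zigzag}$ against Figure $\ref{fig:pt-positive}$. In Type $2$ (a single even edge joining $S_i$ to $S_{i+1}$), each endpoint vertex is $3$-valent with two odd string-edges and one even connector-edge; matching Type $1$ of Figure $\ref{fig:pt-positive}$ forces the connector-edge colour uniquely. In Type $1$ (an even cycle between two strings), the parities of the connecting edges are fixed by balancing, and requiring all surrounding vertices to be positive forces either (i) the cycle is excluded from $T_\rho$ and all even edges share one colour (matching Type $1$), or (ii) the cycle lies in $T_\rho$ and the two connecting edges share one colour (matching Type $4$); the orientation in Figure $\ref{fig:pt-positive}$ (up to reflection) together with the common weight on the cycle edges determines which alternative occurs and forces $\rho$ uniquely on the connector. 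Since the even-edge components of distinct connectors and distinct tails are mutually disjoint, these local choices assemble into a single globally well-defined $\rho$. The main obstacle is this Type $1$ case analysis: the four surrounding vertices together with the binary choice of $T_\rho$-membership of the even cycle produce several sub-cases, each of which must be enumerated and shown to yield a unique positive-matching assignment; the rigidity of Figures $\ref{fig:non-zigzag}$ and $\ref{fig:pt-positive}$ makes this finite check tractable.
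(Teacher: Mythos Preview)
Your decomposition and piece-by-piece strategy is the same as the paper's, but there is a genuine gap in how you handle the tails, and your direction of propagation makes the argument harder than it needs to be.

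The concrete error: at a vertex adjacent to an odd symmetric cycle or fork (one even edge plus two odd edges mapping to the \emph{same} side of the vertex), you assert the configuration matches Type~4, forcing the cycle into $T_\rho$. In fact such a vertex matches Type~3 (even edge red, cycle \emph{not} in $T_\rho$) or Type~4 (even edge blue, cycle in $T_\rho$) equally well; which one is forced depends on the colour already assigned to the even edge together with the sign of $x_j$. Likewise, a vertex with its two odd edges on the same side is never of Type~1 (in Type~1 the two odd edges lie on opposite sides), so your identifications at the outermost tail vertex and at the Type~2 connector endpoints are wrong as stated. The consequence is that your outward-to-inward propagation along a tail begins with a two-fold ambiguity at the outermost vertex, and you never perform the consistency check at the string vertex (where the bend/no-bend geometry is fixed by $\varphi$) that would resolve it; so uniqueness does not follow from what you have written.

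The paper avoids this by propagating in the opposite direction, starting from the string vertex $v_1\in S_1$ where a given tail attaches. There the bend/no-bend datum of $S_1$ is part of the data of $\varphi$, so together with the sign of $\varphi(v_1)$ it pins the colour of the first even edge uniquely; one then walks outward along the tail, and at each subsequent vertex the sign plus the already-determined incoming colour forces the $T_\rho$-membership of the next cycle and then the colour of the next even edge. This is exactly the procedure of \cite[Proposition~4.8]{rau2019}, and it gives existence and uniqueness simultaneously with no branching. For the connectors $E_i$, since every connector vertex is positive by hypothesis and the bend pattern of the string at the attaching vertex is prescribed by the connector type in Figure~\ref{fig:non-zigzag}, the colour of $E_i$ is fixed by a single rule per type; the sub-case analysis you flag as the ``main obstacle'' is not needed.
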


\begin{proof}
Suppose that $v$ is an
inner vertex of $S_i$, $i=2,\ldots,n$.
If the even edge $E_i$ adjacent to $v$
is the first type depicted
in Figure $\ref{fig:non-zigzag}$, the colouring of $E_i$
is blue. Otherwise, the colouring of $E_i$
is red.

Assume that $v_1\in S_1$ is a vertex from which a given
tail $C_1$ depicted in Figure $\ref{fig:zigzag}$ emanates.
The colour rules from Figure $\ref{fig:pt-positive}$
and Figure $\ref{fig:pt-negative}$ induces
a unique colouring of even edges of this tail.
The method to impose colouring is described in
the proof of \cite[Proposition $4.8$]{rau2019},
so we omit it here.
\end{proof}

The \textit{zigzag number} $Z_g(\lambda,\mu)$ is the number of
zigzag covers of type $(g,\lambda,\mu,\undl x)$ \cite[Definition $4.9$]{rau2019}.

\begin{defi}
\label{def:effective-number}
The effective non-zigzag number $Z'_g(\lambda,\mu)$ is twice the
number of effective non-zigzag covers of
type $(g,\lambda,\mu,\undl x)$, and the sum
$E_g(\lambda,\mu)=Z_g(\lambda,\mu)+Z_g'(\lambda,\mu)$
is called the effective number.
\end{defi}

From Remark $\ref{rem:non-zigzag}$, the multiplicity of an
effective non-zigzag cover is at least $2$.
Therefore, we take twice the number of effective non-zigzag covers as the effective non-zigzag number.
\begin{rem}
In \cite[Remark $5.3$]{rau2019}, a combinatorial description of tropical covers was given.
Since we assume the points
$\undl x$ satisfying $x_1<\ldots<x_r$, the fourth condition
of Definition $\ref{def:non-zigzag}$ is actually a condition on the
total order of inner vertices of $C$. Let $t_C$ be the number of total orders, satisfying the fourth condition
of Definition $\ref{def:non-zigzag}$ and compatible with the partial order induced by the orientation,
of the inner vertices of $C$.
It follows from \cite[Remark $5.3$]{rau2019} that $Z'_g(\lambda,\mu)$ depends
on the number of tropical curves $C$ satisfying the first three conditions of
Definition $\ref{def:non-zigzag}$ and $t_C$.
So $Z'_g(\lambda,\mu)$ does not depend on the choice of  $\undl x\subset\rb$. Hence the effective number
$E_g(\lambda,\mu)$ does not depend
on $\undl x$.
\end{rem}

\begin{prop}\label{thm:lower-bound1}
Fix an integer $g\geq0$, and two partitions
$\lambda$, $\mu$ such that $|\lambda|=|\mu|$,
$\{\lambda,\mu\}\not\subset
\{(2k),(k,k)\}$ and $r=l(\lambda)+l(\mu)+2g-2>0$.
Then the number of
real ramified covers is bounded from
below by the effective number and
they have the same parity:
\begin{align*}
    Z_g&(\lambda,\mu)\leq E_g(\lambda,\mu)\leq H^\rb_g(\lambda,\mu;s)\leq
    H^\cb_g(\lambda,\mu),\\
    Z_g(\lambda,\mu&)\equiv E_g(\lambda,\mu)\equiv H^\rb_g(\lambda,\mu;s)\equiv
    H^\cb_g(\lambda,\mu)\mod 2.
\end{align*}
\end{prop}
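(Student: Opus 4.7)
The plan is to separate the double chain into three inequalities and the parity statements, each handled by a different tool. The leftmost inequality $Z_g(\lambda,\mu)\leq E_g(\lambda,\mu)$ is immediate from the definition $E_g=Z_g+Z'_g$ and from $Z'_g\geq 0$. The rightmost inequality $H^\rb_g(\lambda,\mu;s)\leq H^\cb_g(\lambda,\mu)$ follows from Lemma~\ref{lem:realDH1} together with its complex counterpart via the forgetful map from real factorizations to underlying factorizations; a comparison of the sets of factorizations together with the automorphism-to-stabilizer correspondence yields the bound. The hypothesis $\{\lambda,\mu\}\not\subset\{(2k),(k,k)\}$ is what rules out the degenerate symmetric situations that can break this inequality.

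The main content is the middle inequality $E_g(\lambda,\mu)\leq H^\rb_g(\lambda,\mu;s)$. First, I would apply Proposition~\ref{prop:rHurwitz-symmetry} to reduce to the case $s\geq\lceil r/2\rceil$. Next, I would choose a splitting $\undl x=\undl x^+\sqcup\undl x^-$ with $|\undl x^+|=s$ such that $\{x_1,\ldots,x_{\lceil r/2\rceil}\}\subseteq\undl x^+$, which is possible precisely because $s\geq\lceil r/2\rceil$. By Theorem~\ref{thm:real-tropical-Hurwitz}, the number $H^\rb_g(\lambda,\mu;s)$ equals the sum of real multiplicities over isomorphism classes of real tropical covers compatible with this splitting. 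On the right-hand side I would isolate two disjoint positive contributions. The first comes from zigzag covers: each admits a unique colouring compatible with any splitting of $\undl x$ (a consequence of \cite[Proposition 4.8]{rau2019} as recalled in the discussion opening this section) and, by Proposition~\ref{prop:zigzag-parity}, has positive odd real multiplicity, hence at least $1$; the total contribution is therefore at least $Z_g$. The second comes from effective non-zigzag covers: by the unlabelled proposition preceding Definition~\ref{def:effective-number}, each of them admits a unique colouring compatible with the specific splitting chosen above, and by Remark~\ref{rem:non-zigzag} its real multiplicity is at least $2$; since $Z'_g$ is by definition twice the number of effective non-zigzag covers, this contribution is at least $Z'_g$. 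Summing yields $H^\rb_g(\lambda,\mu;s)\geq Z_g+Z'_g=E_g$.

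For the mod-$2$ congruences: Lemma~\ref{lem:parity} combined with Proposition~\ref{prop:zigzag-parity} shows that only zigzag covers contribute odd real multiplicities, and each has a unique compatible colouring, so $H^\rb_g\equiv Z_g\pmod{2}$. The congruence $E_g\equiv Z_g\pmod{2}$ is immediate since $Z'_g$ is even by construction. For $H^\cb_g\equiv H^\rb_g\pmod{2}$, one pairs each complex Hurwitz cover with its complex conjugate; the fixed points of this involution are exactly the covers admitting a real structure, and the parity transfers to the weighted counts once automorphism contributions are accounted for. The main obstacle is performing this last bookkeeping rigorously in the presence of nontrivial automorphisms, and verifying that the excluded cases in the hypothesis $\{\lambda,\mu\}\not\subset\{(2k),(k,k)\}$ are exactly the pathologies that would make the clean pairing argument fail. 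By contrast the middle inequality, while conceptually central, reduces to an almost direct application of the tropical correspondence and the structural results on zigzag and effective non-zigzag covers developed in the preceding pages.
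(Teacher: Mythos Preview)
Your proposal is correct and matches the paper's approach: the paper's own proof is the single sentence ``It is straightforward from \cite[Theorem~4.10]{rau2019} and Definition~\ref{def:effective-number},'' and your argument is precisely the natural unpacking of that citation. The one difference is that you re-derive the rightmost inequality $H^\rb_g\le H^\cb_g$ and the congruence $H^\cb_g\equiv H^\rb_g\pmod 2$ from scratch (and worry about automorphism bookkeeping), whereas the paper simply absorbs these into the citation of Rau's Theorem~4.10; under the hypothesis $\{\lambda,\mu\}\not\subset\{(2k),(k,k)\}$ the Hurwitz numbers are honest counts (as noted in the remark following Proposition~\ref{thm:lower-bound1}), so your conjugation-pairing argument goes through cleanly and the obstacle you flag does not arise.
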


\begin{proof}
It is straightforward
from \cite[Theorem $4.10$]{rau2019} and Definition
$\ref{def:effective-number}$.
\end{proof}

\begin{rem}
Note that $\{\lambda,\mu\}\not\subset
\{(2k),(k,k)\}$ and $l(\lambda)+l(\mu)+2g-2>0$ is
a sufficient condition to guarantee that
real and complex double Hurwitz numbers
are actual counts of covers
(See \cite[Remark $2.5$, Remark $2.6$]{rau2019}
for more details).
\end{rem}

We end this subsection by providing some examples of zigzag covers and effective non-zigzag covers.
\begin{exa}
We present two different kinds of zigzag covers in Figure $\ref{fig:2-zigzag}$,
\begin{figure}[h]
    \centering
    \begin{tikzpicture}
    \draw[line width=0.3mm,gray] (-2,-0.4)--(-1,-0.7)--(0,-1)--
    (-2,-1.3);
    \draw[line width=0.3mm,gray] (-2,-1)--(-1,-0.7);
    \draw[line width=0.3mm,gray] (0,-1)--(1,-1);
    \draw[line width=0.3mm,gray] (1.8,-1) arc[start angle=0, end angle=360, x radius=0.4, y radius=0.2];
    \draw[line width=0.3mm,gray] (1.8,-1)--(2.5,-1)--(3,-0.7);
    \draw[line width=0.3mm,gray] (3,-1.3)--(2.5,-1);
    \draw[fill=black] (0,-1) circle (0.05);
    \draw (0,-0.8) node{$S$};
    \draw[line width=0.3mm,gray] (-2,-1.8)--(3,-1.8);
    \foreach \Point in {(-2,-1.8),(-1,-1.8),(0,-1.8), (1,-1.8),(1.8,-1.8),(2.5,-1.8),(3,-1.8)}
    \draw[fill=black] \Point circle (0.05);
    \draw (-2,-2) node{$-\infty$} (-1,-2) node{$x_1$} (0,-2) node{$x_2$}  (1,-2) node{$x_3$} (1.8,-2) node{$x_4$} (2.5,-2)node{$x_5$} (3,-2) node{$\infty$};
    \draw (0.7,-3) node{$(1)$ $S$ is a point};

    \draw[line width=0.4mm] (4,-0.4)--(7.5,-0.8)--(6.3,-1.2)--
    (9,-1.5);
    \draw[line width=0.3mm,gray] (7.5,-0.8)--(9,-0.8);
    \draw[line width=0.3mm,gray] (4,-1)--(4.6,-1.2)--(5,-1.2);
    \draw[line width=0.3mm,gray] (5.8,-1.2) arc[start angle=0, end angle=360, x radius=0.4, y radius=0.2];
    \draw[line width=0.3mm,gray] (5.8,-1.2)--(6.3,-1.2);
    \draw[line width=0.3mm,gray] (4,-1.4)--(4.6,-1.2);
    \draw[line width=0.3mm,gray] (4,-1.8)--(9,-1.8);
    \foreach \Point in {(4,-1.8),(4.6,-1.8),(5,-1.8), (5.8,-1.8),(6.3,-1.8),(7.5,-1.8),(9,-1.8)}
    \draw[fill=black] \Point circle (0.05);
    \draw (6.8,-0.5) node{$S$};
    \draw (4,-2) node{$-\infty$} (4.6,-2) node{$x_1$} (5,-2) node{$x_2$}  (5.8,-2) node{$x_3$} (6.3,-2) node{$x_4$} (7.5,-2)node{$x_5$} (9,-2) node{$\infty$};
    \draw (7,-3) node{$(2)$ $S$ is a string};
    \end{tikzpicture}
    \caption{Two zigzag covers.}
    \label{fig:2-zigzag}
\end{figure}
two types of effective non-zigzag covers in Figure $\ref{fig:2-non-zigzag}$,
\begin{figure}[h]
    \centering
    \begin{tikzpicture}
    \draw[line width=0.4mm] (-2,-0.4)--(-0.8,-0.7)--(3,-0.7);
    \draw[line width=0.3mm,gray] (-1.2,-1)--(-0.8,-0.7);
    \draw[line width=0.4mm] (-2,-1)--(-1.2,-1)--(1,-1.6)--(0,-1.8)--(3,-2.1);
    \draw[line width=0.3mm,gray] (-2,-1.8)--(0,-1.8);
    \draw[line width=0.3mm,gray] (1,-1.6)--(2,-1.6)--(3,-1.3);
    \draw[line width=0.3mm,gray] (2,-1.6)--(3,-1.9);
    \draw[line width=0.3mm,gray] (-2,-2.3)--(3,-2.3);
    \foreach \Point in {(-2,-2.3),(-1.2,-2.3),(-0.8,-2.3), (0,-2.3),(1,-2.3),(2,-2.3),(3,-2.3)}
    \draw[fill=black] \Point circle (0.05);
    \draw (-0.8,-0.5) node{$S_2$}  (-1.35,-0.8) node{$E_1$} (0.2,-1.2) node{$S_1$};
    \draw (0.7,-3.2) node{$(1)$ $E_1$ is of type $1$};
    \draw (-2,-2.5) node{$-\infty$} (-1.2,-2.5) node{$x_1$} (-0.8,-2.5) node{$x_2$}  (0,-2.5) node{$x_3$} (1,-2.5) node{$x_4$} (2,-2.5)node{$x_5$} (3,-2.5) node{$\infty$};

    \draw[line width=0.4mm] (4,-0.4)--(4.6,-0.6)--(4,-0.8);
    \draw[line width=0.3mm,gray] (4.6,-0.6)--(5.5,-0.6);
    \draw[line width=0.4mm] (9,-0.4)--(5.5,-0.6)--(8,-1.2)--(6.6,-1.6)--(9,-1.9);
    \draw[line width=0.3mm,gray] (4,-1.3)--(5.9,-1.6)--(6.6,-1.6);
    \draw[line width=0.3mm,gray] (4,-1.9)--(5.9,-1.6);
    \draw[line width=0.3mm,gray] (8,-1.2)--(9,-1.2);
    \draw[line width=0.3mm,gray] (4,-2.3)--(9,-2.3);
    \foreach \Point in {(4,-2.3),(4.6,-2.3),(5.5,-2.3), (5.9,-2.3),(6.6,-2.3),(8,-2.3),(9,-2.3)}
    \draw[fill=black] \Point circle (0.05);
    \draw (4.3,-0.3) node{$S_2$}  (4.9,-0.4) node{$E_1$} (6,-0.3) node{$S_1$};
    \draw (7,-3.2) node{$(2)$ $E_1$ is of type $2$};
    \draw (4,-2.5) node{$-\infty$} (4.6,-2.5) node{$x_1$} (5.5,-2.5) node{$x_2$}  (5.9,-2.5) node{$x_3$} (6.6,-2.5) node{$x_4$} (8,-2.5)node{$x_5$} (9,-2.5) node{$\infty$};
    \end{tikzpicture}
    \caption{Two effective non-zigzag covers.}
    \label{fig:2-non-zigzag}
\end{figure}
and a tropical cover that is not zigzag, nor effective non-zigzag cover in
Figure $\ref{fig:not-zigzags}$.
\begin{figure}[h]
    \centering
    \begin{tikzpicture}
    \draw[line width=0.4mm] (4,-0.4)--(7,-0.8)--(4,-1.2);
    \draw[line width=0.3mm,gray] (7,-0.8)--(8,-0.8);
    \draw[line width=0.4mm] (11,-0.4)--(8,-0.8)--(10,-1.0)--(6.6,-1.6)--(11,-1.9);
    \draw[line width=0.3mm,gray] (4,-1.35)--(5.9,-1.6)--(6.6,-1.6);
    \draw[line width=0.3mm,gray] (4,-1.85)--(5.9,-1.6);
    \draw[line width=0.3mm,gray] (10,-1)--(11,-1);
    \draw[line width=0.3mm,gray] (4,-2.3)--(11,-2.3);
    \foreach \Point in {(4,-2.3),(6.6,-2.3),(7,-2.3), (8,-2.3),(5.9,-2.3),(10,-2.3),(11,-2.3)}
    \draw[fill=black] \Point circle (0.05);
    \draw (4.9,-0.3) node{$S_2$}  (7.5,-0.5) node{$E_1$} (10,-0.3) node{$S_1$};
    \draw (4,-2.5) node{$-\infty$} (5.9,-2.5) node{$x_1$} (6.6,-2.5) node{$x_2$}  (7,-2.5) node{$x_3$} (8,-2.5) node{$x_4$} (10,-2.5)node{$x_5$} (11,-2.5) node{$\infty$};
    \end{tikzpicture}
    \caption{A tropical cover that is not zigzag, nor effective
    non-zigzag.}
    \label{fig:not-zigzags}
\end{figure}
\end{exa}

\section{Asymptotic behaviour of effective non-zigzag covers}
In this section, we follow closely the arguments of \cite[Section $5$]{rau2019} to investigate the asymptotic behaviour of effective non-zigzag covers.
A sufficient condition
to guarantee the existence of effective non-zigzag covers
is given, and the asymptotic behaviour of effective numbers is also
considered.

We first recall the definition of tail decomposition
of a partition $\lambda$ introduced in \cite{rau2019}.
Let $\lambda=(\lambda_1,\ldots,\lambda_n)$ and
$\mu=(\mu_1,\ldots,\mu_m)$ be two partitions of $d$.
We use $\lambda_i\in\lambda$ to mean that
$\lambda_i$ is a part of $\lambda$.
We denote
\begin{align*}
2\lambda&:=(2\lambda_1,2\lambda_2,\ldots,2\lambda_n),\\
\lambda^2&:=(\lambda_1,\lambda_1,\ldots,\lambda_n,\lambda_n),\\
(\lambda,\mu)&:=(\lambda_1,\ldots,\lambda_n,\mu_1,\ldots,\mu_m).
\end{align*}
For any partition $\lambda$, there is a unique
decomposition $\lambda=(2\lambda_{2e},2\lambda_{2o},
\lambda_{o,o}^2,\lambda_o)$
such that
\begin{itemize}
    \item every part $\lambda_{2e}^i$ in $\lambda_{2e}$ is an even number;
    \item every part $\lambda_{2o}^i$ or $\lambda_{o,o}^i$ in $\lambda_{2o}$ or $\lambda_{o,o}$ respectively is an odd number;
    \item every part $\lambda_o^i$ in $\lambda_o$ is an
    odd number and the odd number $\lambda_o^i$ does
    not appear more than once in $\lambda_o$.
\end{itemize}
Such a decomposition is called the {\it tail decomposition} of $\lambda$.
Let $l(\lambda_{1,1})$ denote the number
of ones which appear in $\lambda_{o,o}$.

\begin{exa}
Assume $\lambda=(7,6,4,5,5,3,1,1,1)$, then the tail
decomposition of $\lambda$ is
$\lambda=(2\lambda_{2e},2\lambda_{2o},
\lambda_{o,o}^2,\lambda_o)$,
where $\lambda_{2e}=(2)$, $\lambda_{2o}=(3)$,
$\lambda_{o,o}=(5,1)$, $\lambda_o=(7,3,1)$.
\end{exa}

\begin{defi}\label{def:pmenz}
An effective non-zigzag cover
$\varphi:C\to T\pb^1$ of type
$(g,\lambda,\mu,\undl x)$
is properly mixed if all tails of the type depicted
in Figure $\ref{fig:zigzag}$ are attached to
a unique string $S$ such that
the branch points $x_1<x_2<\ldots<x_r$
grouped in subsets of respective sizes $f$, $t_l$, $b_l$,
$u-1$, $b_r$, $2g$, $t_r$
occur as images of
\begin{itemize}
    \item the vertices of the bridge edges of $C$;
    \item the symmetric
    fork vertices of tails of type $o,o$ labelled by $\lambda$ attaching to $S$;
    \item bends of $S$ with peaks pointing to the left;
    \item unbent vertices of $S$ other than the intersection point
    of the bridge edge $E$ and the string $S$;
    \item bends of $S$ with peaks pointing to the right;
    \item the vertices of symmetric cycles located on
    tails labelled by $\mu$;
    \item the symmetric fork vertices of tails of type
    $o,o$ labelled by $\mu$ attaching to $S$.
\end{itemize}
Here, $f$ is the number of vertices of the bridge edges
and $f\leq \left\lceil\frac{r}{2}\right\rceil$,
$t_l$ or $t_r$ is the number of tails of
the first type depicted in Figure $\ref{fig:zigzag}$
labelled by $\lambda_{o,o}$ or $\mu_{o,o}$
respectively,
$b_l$ or $b_r$ is the number
of bends of the string $S$ with the peaks pointing to
the left or right respectively,
and $u$ is the number of unbent vertices of $S$.
\end{defi}

\begin{exa}
The effective non-zigzag cover depicted in Figure $\ref{fig:effective-nonzigzag4}$ is properly mixed.
In this effective non-zigzag cover, $f=6$.
\begin{figure}[h]
    \centering
    \begin{tikzpicture}
    \draw[line width=0.4mm] (-4,2.4)--(-2.2,2.2)--(-2.5,2)--(-4,1.8);
    \draw[line width=0.3mm,gray] (-2.2,2.2)--(-1.5,2.2);
    \draw[line width=0.4mm] (7,2.6)--(-1.5,2.2)--(7,1.8);
    \draw[line width=0.3mm,gray] (-2.5,2)--(-2.8,1.7);
    \draw[line width=0.4mm] (-4,1.5)--(-3.1,1.6)--(-2.8,1.7)--(7,1.6);
    \draw[line width=0.3mm,gray] (-3.1,1.6)--(-3.2,1.2);
    \draw[line width=0.4mm] (-4,0.5)-- (-3.2,1.2)--(3.2,1)--(4,0.8)--(2.9,0.6)--
    (1.8,0.4)--(3.1,0.32)--(3.8,0.2)--(3,0.15)--(1.9,0)--(4.1,-0.2)--(2,-0.4)--(7,-0.6);
    \draw[line width=0.3mm,gray] (3.2,1)--(4.2,1);
    \draw[line width=0.3mm,gray] (3.1,0.32)--(4.2,0.32);
    \draw[line width=0.3mm,gray] (2.9,0.6)--(1.7,0.6);
    \draw[line width=0.3mm,gray] (3,0.15)--(1.7,0.15);
    \draw[line width=0.3mm,gray] (-4,0.4)--(-1.3,0.4);
    \draw[line width=0.3mm,gray] (-4,-0.4)--(-1.3,-0.4);
    \draw[line width=0.3mm,gray] (-1.3,0)--(-4,0);
    \draw[line width=0.3mm,gray] (4,0.8)--(4.2,0.8);
    \draw[line width=0.3mm,gray] (1.8,0.4)--(1.7,0.4);
    \draw[line width=0.3mm,gray] (3.8,0.2)--(4.2,0.2);
    \draw[line width=0.3mm,gray] (1.9,0)--(1.7,0);
    \draw[line width=0.3mm,gray] (4.1,-0.2)--(4.2,-0.2);
    \draw[line width=0.3mm,gray] (2,-0.4)--(1.7,-0.4);
    \draw[line width=0.3mm,gray,dotted] (4.2,0.8)--(4.5,0.8);
    \draw[line width=0.3mm,gray,dotted] (4.2,0.32)--(4.5,0.32);
    \draw[line width=0.3mm,gray,dotted] (4.2,1)--(4.5,1);
    \draw[line width=0.3mm,gray,dotted] (4.2,0.2)--(4.5,0.2);
    \draw[line width=0.3mm,gray,dotted] (4.2,-0.2)--(4.5,-0.2);
    \draw[line width=0.3mm,gray,dotted] (1.4,0.4)--(1.7,0.4);
    \draw[line width=0.3mm,gray,dotted] (1.4,0.15)--(1.7,0.15);
    \draw[line width=0.3mm,gray,dotted] (1.4,0.6)--(1.7,0.6);
    \draw[line width=0.3mm,gray,dotted] (1.4,-0.4)--(1.7,-0.4);
    \draw[line width=0.3mm,gray,dotted] (1.4,0)--(1.7,0);
    \draw[line width=0.3mm,gray,dotted] (-1.3,0.4)--(-1,0.4);
    \draw[line width=0.3mm,gray,dotted] (-1.3,-0.4)--(-1,-0.4);
    \draw[line width=0.3mm,gray,dotted] (-1.3,0)--(-1,0);
    \draw[line width=0.3mm,gray,dotted] (-1,0.2)--(-0.8,0.2);
    \draw[line width=0.3mm,gray,dotted] (-1,-0.2)--(-0.8,-0.2);
    \draw[line width=0.3mm,gray,dotted] (-1,-0.5)--(-0.8,-0.5);
    \draw[line width=0.3mm,gray,dotted] (-1,0.5)--(-0.8,0.5);
    \draw[line width=0.3mm,gray,dotted] (-1,0.1)--(-0.8,0.1);
    \draw[line width=0.3mm,gray,dotted] (-1,-0.1)--(-0.8,-0.1);
    \draw[line width=0.3mm,gray] (-0.8,0.1)--(0.4,0)--(-0.8,-0.1);
    \draw[line width=0.3mm,gray] (0.4,0)--(1.1,0);
    \draw[line width=0.3mm,gray,dotted] (1.1,0)--(1.4,0);
    \draw[line width=0.3mm,gray] (-0.8,-0.2)--(0.2,-0.35)--(-0.8,-0.5);
    \draw[line width=0.3mm,gray] (0.2,-0.35)--(1.1,-0.35);
    \draw[line width=0.3mm,gray,dotted] (1.1,-0.35)--(1.4,-0.35);
    \draw[line width=0.3mm,gray] (-0.8,0.2)--(0.3,0.35)--(-0.8,0.5);
    \draw[line width=0.3mm,gray] (0.3,0.35)--(1.1,0.35);
    \draw[line width=0.3mm,gray,dotted] (1.4,0.35)--(1.1,0.35);
    \draw[line width=0.3mm,gray] (5.4,0.6) arc[start angle=0, end angle=360, x radius=0.25, y radius=0.1];
    \draw[line width=0.3mm,gray,dotted]  (4.5,0.6)--(4.7,0.6);
    \draw[line width=0.3mm,gray] (4.7,0.6)--(4.9,0.6);
    \draw[line width=0.3mm,gray,dotted]  (5.6,0.6)--(5.8,0.6);
    \draw[line width=0.3mm,gray] (5.4,0.6)--(5.6,0.6);
    \draw[line width=0.3mm,gray] (5.2,-0.2) arc[start angle=0, end angle=360, x radius=0.25, y radius=0.1];
    \draw[line width=0.3mm,gray,dotted]  (4.4,-0.2)--(4.6,-0.2);
    \draw[line width=0.3mm,gray] (4.6,-0.2)--(4.7,-0.2);
    \draw[line width=0.3mm,gray,dotted]  (5.4,-0.2)--(5.8,-0.2);
    \draw[line width=0.3mm,gray] (5.2,-0.2)--(5.4,-0.2);
    \draw[line width=0.3mm,gray,dotted] (5.8,0.8)--(6,0.8);
    \draw[line width=0.3mm,gray] (6,0.8)--(6.5,0.8)--(7,1);
    \draw[line width=0.3mm,gray] (6.5,0.8)--(7,0.6);
    \draw[line width=0.3mm,gray,dotted] (5.8,0.2)--(6,0.2);
    \draw[line width=0.3mm,gray] (6,0.2)--(6.3,0.2)--(7,0.4);
    \draw[line width=0.3mm,gray] (6.3,0.2)--(7,0);
    \draw (6.4,-0.4) node{$S_4$};
    \draw (2.6,1.4) node{$S_3$};
    \draw (6,2.2) node{$S_1$};
    \draw (-3.3,2.5) node{$S_2$};
    \draw (-2.95,1.4) node{\tiny{$E_3$}};
    \draw (-2.45,1.84) node{\tiny{$E_2$}};
    \draw (-2,2.4) node{\tiny{$E_1$}};
    \draw (0.3,-1.3) node{\tiny{$t_l$}};
    \draw (1.9,-1.3) node{\tiny{$b_l$}};
    \draw (3,-1.3) node{\tiny{$u-1$}} (4,-1.3) node{\tiny{$b_r$}} (4.9,-1.3) node{\tiny{$2g$}} (6.4,-1.3) node{\tiny{$t_r$}};
    \draw[line width=0.3mm] (-4,-1)--(7,-1);
    \foreach \Point in {(-4,-1),(-3.2,-1),(-3.1,-1),(-2.8,-1), (-2.5,-1),(-2.2,-1), (-1.5,-1), (0.2,-1),(0.4,-1),(0.3,-1),(1.8,-1),(1.9,-1), (2,-1),(2.9,-1),(3,-1),(3.1,-1),(3.2,-1),(3.8,-1),(4,-1),(4.1,-1),(4.7,-1),(5.2,-1),(4.9,-1), (5.4,-1),(6.3,-1),(6.5,-1),(7,-1)}
    \draw[fill=black] \Point circle (0.03);
    \end{tikzpicture}
    \caption{An example of properly mixed effective non-zigzag cover.}
    \label{fig:effective-nonzigzag4}
\end{figure}
\end{exa}

\begin{prop}\label{prop:non-zigzag1}
Fix $g\in\nb$, partitions $\lambda$, $\mu$ with
$|\lambda|=|\mu|$.
Suppose that $l(\lambda_o,\mu_o)>2$ and $\sum_{i=1}^{l(\lambda_o)}\lambda_o^i\geq\sum_{i=1}^{l(\mu_o)}\mu_o^i$.
\begin{enumerate}
    \item[$(1)$] For sufficiently large even integer $m$,
    properly mixed effective non-zigzag covers $\varphi:C\to T\pb^1$ of type $(g,(\lambda,1^m),(\mu,1^m),\undl x)$ exist as in Definition $\ref{def:pmenz}$, where $f=2\max(l(\lambda_o),l(\mu_o))-2$.
    \item[$(2)$] For a properly mixed effective non-zigzag cover $\varphi:C\to T\pb^1$ obtained above,
    there exist constants $N_1$ and $N_2$ which are not dependent on $m$ such that
    $$
    \begin{aligned}
        t_l\geq\frac{m}{2}-N_1,~ t_r\geq\frac{m}{2}-N_1,\\
        b_l\geq\frac{m}{2}-N_2,~ b_r\geq\frac{m}{2}-N_2.
    \end{aligned}
    $$
\end{enumerate}
\end{prop}

\begin{proof}
$(1)$
Since $l(\lambda_o,\mu_o)\equiv|\lambda|+|\mu|=2d\mod 2$,
the number $l(\lambda_o,\mu_o)$ is even.
Suppose that $l(\lambda_o)-l(\mu_o)=2\ak$.
We construct a properly mixed zigzag cover of type $(g,(\lambda,1^m),(\mu,1^m),\undl x)$ by considering the three cases: $\ak=0$, $\ak>0$ and $\ak<0$.

Case $\ak\geq0$:
We assume that $m$ is a sufficiently large even integer,
so when $\ak>0$ it is possible to take $2\ak$ ones from $(1^m)$ and add them to the partition $\mu_o$.
We consider the partitions $\lambda_o$ and $(\mu_o,1^{2\ak})$ which have the same length $n=l(\lambda_o)$.
Since the partition $\lambda_o$ does not contain repeated elements,
we can find an ordering of the elements $\lambda_1,\ldots,\lambda_n$ of $\lambda_o$ and $\mu_1,\ldots,\mu_n$ of $(\mu_o,1^{2\ak})$ such that $w_k:=\sum_{i=1}^k(\lambda_i-\mu_i)$ is different from zero for all $k=1,\ldots,n-1$, and $w=\sum_{i=1}^n\lambda_i-\sum_{i=1}^{n-1}\mu_i>0$.
The existence of such an ordering is guaranteed by the following reasons.
Because there are always at least two distinct elements in $\lambda_o$ to choose from, it is possible to choose an ordering which guarantees that $w_k$ is non-zero for all $k=1,\ldots,n-1$.
Moreover, once the maximal odd integer in $\mu_o$ is chosen to be $\mu_n$ in the ordering $\mu_1,\ldots,\mu_n$, we obtain that
$w>0$ from the assumption that $\sum_{i=1}^{l(\lambda_o)}\lambda_o^i\geq\sum_{i=1}^{l(\mu_o)}\mu_o^i$.

We construct a weighted and oriented graph $C$ by the following recursive procedure:
\begin{itemize}
    \item[$(\textrm{i})$] We start with a string $S_1$ with two ends which are labelled by $\lambda_1$ and $\mu_1$.
    The rule of equipping orientation on an end is that any end labelled by a part of $(\lambda,1^m)$ (resp. $(\mu,1^m)$) is oriented inwards (resp. outwards).
    Therefore, the string $S_1$ is oriented from left to right.
    \item[$(\textrm{ii})$] We add a bridge edge $E_k$ connecting $S_k$ to the next string $S_{k+1}$, where $k=1,\ldots,n-1$. Suppose that $E_k$ intersects with $S_k$ at $u_k$ and intersects with $S_{k+1}$ at $v_{k+1}$, where $k=1,\ldots,n-1$. The weight of $E_k$ is $|w_k|$.
    If $w_k>0$, the bridge edge $E_k$ is oriented such that it points
    from $u_k$ to $v_{k+1}$ ({\it i.e.} $E_k$ directs to the right when coming from $S_k$).
    Otherwise, $E_k$ is oriented such that it points
    from $v_{k+1}$ to $u_k$ ({\it i.e.} $E_k$ directs to the left when coming from $S_k$).
    \item[$(\textrm{iii})$] The next string $S_{k+1}$ has two ends which are labelled by $\lambda_{k+1}$ and $\mu_{k+1}$, $k=1,\ldots,n-1$.
    Now we choose the position of $u_{k+1}$.
    If $w_{k}>0$, we choose $u_{k+1}$ on the right of $v_{k+1}$.
    Otherwise, we add $u_{k+1}$ on the left of $v_{k+1}$ (See Figure $\ref{fig:position-u}$ for a local picture of the bridge vertices $v_{k+1}$ and $u_{k+1}$).
    \begin{figure}[h]
        \centering
        \begin{tikzpicture}
        \draw[line width=0.4mm] (-4,2.2)--(-3.7,2)--(-0.5,2);
        \draw[line width=0.3mm,gray] (-3.7,2)--(-3.5,1.7);
        \draw[line width=0.3mm,gray,dotted] (-3.5,1.6)--(-3.5,1.3);
        \draw[line width=0.3mm,gray,dotted] (-2,1.6)--(-2,1.3);
        \draw[line width=0.3mm,gray,dotted] (-1,1.6)--(-1,1.3);
        \draw[line width=0.3mm,gray] (-3.4,1.3)--(-3,1);
        \draw[line width=0.4mm] (-4,0.9)--(-3,1)--(-1.5,1)--(-0.5,1.1);
        \draw[line width=0.3mm,gray] (-1.5,1)--(-0.7,0.7);
        \draw[line width=0.3mm,gray,dotted] (-0.6,0.7)--(-0.6,0.4);
        \draw (-2.8,0.8) node{\tiny{$v_{k+1}$}} (-1.6,0.8) node{\tiny{$u_{k+1}$}};
        \draw (-3.4,1.1) node{\tiny{$E_k$}} (-0.6,0.8) node{\tiny{$E_{k+1}$}} (-3.8,1.8) node{\tiny{$E_1$}} (-0.6,2.15) node{\tiny{$S_1$}} (-0.7,1.2) node{\tiny{$S_{k+1}$}};
        \draw (-2,0) node{$(1)$ $w_k>0$};
        \draw[line width=0.4mm] (4,2.2)--(3.7,2)--(0.5,2);
        \draw[line width=0.3mm,gray] (3.7,2)--(3.5,1.7);
        \draw[line width=0.3mm,gray,dotted] (2,1.6)--(2,1.3);
        \draw[line width=0.3mm,gray,dotted] (1,1.6)--(1,1.3);
        \draw[line width=0.3mm,gray,dotted] (3.5,1.6)--(3.5,1.3);
        \draw[line width=0.3mm,gray] (3.4,1.3)--(3,1);
        \draw[line width=0.4mm] (4,0.9)--(3,1)--(1.5,1)--(0.5,1.1);
        \draw[line width=0.3mm,gray] (1.5,1)--(0.7,0.7);
        \draw[line width=0.3mm,gray,dotted] (0.6,0.7)--(0.6,0.4);
        \draw (2.9,0.85) node{\tiny{$v_{k+1}$}} (1.6,0.8) node{\tiny{$u_{k+1}$}};
        \draw (3.4,1.1) node{\tiny{$E_k$}} (0.6,0.85) node{\tiny{$E_{k+1}$}} (3.8,1.8) node{\tiny{$E_1$}} (0.6,2.15) node{\tiny{$S_1$}} (0.7,1.2) node{\tiny{$S_{k+1}$}};
        \draw (2,0) node{$(2)$ $w_k<0$};
        \end{tikzpicture}
        \caption{Local picture of bridge vertices $v_{k+1}$ and $u_{k+1}$.}
        \label{fig:position-u}
    \end{figure}
    Note that this construction ensures that the internal bounded edge of $S_{k+1}$,
    where $k=1,\ldots,n-2$, carries a weight and is oriented from left to right by the balancing condition at the inner vertices of $S_{k+1}$.
    Hence all the strings $S_1,\ldots,S_{n-1}$ are oriented from left to right without bends.
    Moreover, all bridge edges are of the first type depicted in Figure $\ref{fig:non-zigzag}$.
    \item[$(\textrm{iv})$] We attach tails of the first, second or third type depicted in Figure $\ref{fig:zigzag}$ which correspond to each part of $((\lambda,1^m)_{o,o},(\mu,1^{m-2\ak})_{o,o})$, $(\lambda_{2o},\mu_{2o})$ or $(\lambda_{2e},\mu_{2e})$ respectively to the final string $S_n$ on the right of the final bridge vertex $v_n$.
    Note that $w=\sum_{i=1}^n\lambda_i-\sum_{i=1}^{n-1}\mu_i>0$, so attaching the tails on the right of the final bridge vertex $v_n$ is possible (See Figure $\ref{fig:local-picture-v}$ for a local picture of the bridge vertex $v_{n}$).
    \begin{figure}[h]
        \centering
        \begin{tikzpicture}
        \draw[line width=0.3mm,gray,dotted] (2,1.6)--(2,1.3);
        \draw[line width=0.3mm,gray,dotted] (3.5,1.6)--(3.5,1.3);
        \draw[line width=0.4mm] (0.5,1)--(1.5,1)--(3.7,0.7)--(2.5,0.4)--(6,0.1);
        \draw[line width=0.3mm,gray] (1.5,1)--(2,1.3);
        \draw[line width=0.3mm,gray] (3.7,0.7)--(4,0.7);
        \draw[line width=0.3mm,gray,dotted] (4,0.7)--(4.3,0.7);
        \draw[line width=0.3mm,gray,dotted] (5,0.7)--(5.3,0.7);
        \draw[line width=0.3mm,gray] (5.3,0.7)--(5.7,0.7)--(6,0.9);
        \draw[line width=0.3mm,gray] (5.7,0.7)--(6,0.5);
        \draw[line width=0.3mm,gray] (2.5,0.4)--(2.2,0.4)--(0.5,0.2);
        \draw[line width=0.3mm,gray] (0.5,0.6)--(2.2,0.4);
        \draw (1.6,0.8) node{\tiny{$v_{n}$}} (0.6,0.85) node{\tiny{$\lambda_{n}$}} (6,0.2) node{\tiny{$S_{n}$}} (1.4,1.2) node{\tiny{$E_{n-1}$}};
        \end{tikzpicture}
        \caption{Local picture of bridge vertex $v_{n}$ when $w_{n-1}<0$. Note that $\lambda_n>|w_{n-1}|$, where $|w_{n-1}|=\sum_{i=1}^{n-1}\mu_i-\sum_{i=1}^{n-1}\lambda_i$.}
        \label{fig:local-picture-v}
    \end{figure}
    At last, we obtain a graph $C$.
    Since $m$ is large enough, there is at least one tail of the first type depicted in Figure $\ref{fig:zigzag}$, on which we place $g$ balanced cycles.
    By the balancing condition, we extend the orientation and the weight function to all the edges of $C$.
    \item[$(\textrm{v})$] The orientation of edges of $C$ induces a partial order on the set of inner vertices.
    We now choose a total order of the inner vertices of the graph $C$ extending the partial order such that the set of bridge vertices $\{u_1, v_2,u_2,\ldots,v_{n-1},u_{n-1},v_n\}$ contains exactly the first $2n-2$ vertices.
    By our construction, it is clear that this is possible.
\end{itemize}
For sufficiently large even integer $m$, it is obvious that $2n-2<\frac{r}{2}=\frac{l(\lambda,1^m)+l(\mu,1^m)+2g-2}{2}$,
so the resulting tropical cover $\varphi:C\to T\pb^1$ is properly mixed.
See Figure $\ref{fig:effective-nonzigzag1}$ for an example in the case $\ak=0$,
and see Figure $\ref{fig:effective-nonzigzag2}$ for an example in the case $\ak>0$.
\begin{figure}[h]
    \centering
    \begin{tikzpicture}
    \draw[line width=0.4mm] (-4,2)--(-2.9,1.8)--(7,1.8);
    \draw[line width=0.3mm,gray] (-2.9,1.8)--(-2.5,1.5);
    \draw[line width=0.4mm] (-4,1.3)--(-2.5,1.5)--(7,1.5);
    \draw[line width=0.3mm,gray] (-2.2,1.5)--(-1.8,1.2);
    \draw[line width=0.4mm] (-4,0.5)--
    (-1.8,1.2);
    \draw[line width=0.4mm] (-1.8,1.2)--(4,0.8)--
    (1.8,0.4)--(3.8,0.2)--(1.9,0)--(4.1,-0.2)--(2,-0.4)--(7,-0.6);
    \draw[line width=0.3mm,gray] (-4,0.4)--(-1.3,0.4);
    \draw[line width=0.3mm,gray] (-4,-0.4)--(-1.3,-0.4);
    \draw[line width=0.3mm,gray] (-1.3,0)--(-4,0);
    \draw[line width=0.3mm,gray] (4,0.8)--(4.2,0.8);
    \draw[line width=0.3mm,gray] (1.8,0.4)--(1.7,0.4);
    \draw[line width=0.3mm,gray] (3.8,0.2)--(4.2,0.2);
    \draw[line width=0.3mm,gray] (1.9,0)--(1.7,0);
    \draw[line width=0.3mm,gray] (4.1,-0.2)--(4.2,-0.2);
    \draw[line width=0.3mm,gray] (2,-0.4)--(1.7,-0.4);
    \draw[line width=0.3mm,gray,dotted] (4.2,0.8)--(4.5,0.8);
    \draw[line width=0.3mm,gray,dotted] (4.2,0.2)--(4.5,0.2);
    \draw[line width=0.3mm,gray,dotted] (4.2,-0.2)--(4.5,-0.2);
    \draw[line width=0.3mm,gray,dotted] (1.4,0.4)--(1.7,0.4);
    \draw[line width=0.3mm,gray,dotted] (1.4,-0.4)--(1.7,-0.4);
    \draw[line width=0.3mm,gray,dotted] (1.4,0)--(1.7,0);
    \draw[line width=0.3mm,gray,dotted] (-1.3,0.4)--(-1,0.4);
    \draw[line width=0.3mm,gray,dotted] (-1.3,-0.4)--(-1,-0.4);
    \draw[line width=0.3mm,gray,dotted] (-1.3,0)--(-1,0);
    \draw[line width=0.3mm,gray,dotted] (-1,0.2)--(-0.8,0.2);
    \draw[line width=0.3mm,gray,dotted] (-1,-0.2)--(-0.8,-0.2);
    \draw[line width=0.3mm,gray,dotted] (-1,-0.5)--(-0.8,-0.5);
    \draw[line width=0.3mm,gray,dotted] (-1,0.5)--(-0.8,0.5);
    \draw[line width=0.3mm,gray,dotted] (-1,0.1)--(-0.8,0.1);
    \draw[line width=0.3mm,gray,dotted] (-1,-0.1)--(-0.8,-0.1);
    \draw[line width=0.3mm,gray] (-0.8,0.1)--(0.4,0)--(-0.8,-0.1);
    \draw[line width=0.3mm,gray] (0.4,0)--(1.1,0);
    \draw[line width=0.3mm,gray,dotted] (1.1,0)--(1.4,0);
    \draw[line width=0.3mm,gray] (-0.8,-0.2)--(0.2,-0.35)--(-0.8,-0.5);
    \draw[line width=0.3mm,gray] (0.2,-0.35)--(1.1,-0.35);
    \draw[line width=0.3mm,gray,dotted] (1.1,-0.35)--(1.4,-0.35);
    \draw[line width=0.3mm,gray] (-0.8,0.2)--(0.3,0.35)--(-0.8,0.5);
    \draw[line width=0.3mm,gray] (0.3,0.35)--(1.1,0.35);
    \draw[line width=0.3mm,gray,dotted] (1.4,0.35)--(1.1,0.35);
    \draw[line width=0.3mm,gray] (5.1,0.6) arc[start angle=0, end angle=360, x radius=0.15, y radius=0.1];
    \draw[line width=0.3mm,gray,dotted]  (4.5,0.6)--(4.7,0.6);
    \draw[line width=0.3mm,gray] (4.7,0.6)--(4.8,0.6);
    \draw[line width=0.3mm,gray] (5.1,0.6)--(5.2,0.6);
    \draw[line width=0.3mm,gray] (5.5,0.6) arc[start angle=0, end angle=360, x radius=0.15, y radius=0.1];
    \draw[line width=0.3mm,gray,dotted]  (5.6,0.6)--(5.8,0.6);
    \draw[line width=0.3mm,gray] (5.5,0.6)--(5.6,0.6);
    \draw[line width=0.3mm,gray,dotted] (5.8,0.8)--(6,0.8);
    \draw[line width=0.3mm,gray] (6,0.8)--(6.5,0.8)--(7,1);
    \draw[line width=0.3mm,gray] (6.5,0.8)--(7,0.6);
    \draw[line width=0.3mm,gray,dotted] (5.8,0.2)--(6,0.2);
    \draw[line width=0.3mm,gray] (6,0.2)--(6.3,0.2)--(7,0.4);
    \draw[line width=0.3mm,gray] (6.3,0.2)--(7,0);
    \draw (6.4,-0.4) node{$S_3$};
    \draw (2.6,1.3) node{$S_2$};
    \draw (2,2) node{$S_1$};
    \draw (-3.8,1.8) node{\tiny{$\lambda_1$}}
    (-3.8,1.2) node{\tiny{$\lambda_2$}} (-3.8,0.7) node{\tiny{$\lambda_3$}};
    \draw (6.8,1.9) node{\tiny{$\mu_1$}}
    (6.8,1.3) node{\tiny{$\mu_2$}} (6.8,-0.7) node{\tiny{$\mu_3$}};
    \draw (-1.8,1.35) node{\tiny{$E_2$}};
    \draw (-2.4,1.65) node{\tiny{$E_1$}};
    \draw[line width=0.3mm] (-4,-1)--(7,-1);
    \foreach \Point in {(-4,-1),(-2.5,-1), (-2.2,-1),(-1.8,-1),(-2.9,-1),(0.2,-1),(0.4,-1),(0.3,-1),(1.8,-1),(1.9,-1),(2,-1),(3.8,-1),(4,-1),(4.1,-1),(4.8,-1),(5.1,-1),(5.2,-1),(5.5,-1),(6.3,-1),(6.5,-1),(7,-1)}
    \draw[fill=black] \Point circle (0.03);
    \end{tikzpicture}
    \caption{An example of effective non-zigzag cover
    constructed in the case when $l(\lambda_o)=l(\mu_o)=3$.}
    \label{fig:effective-nonzigzag1}
\end{figure}

\begin{figure}[h]
    \centering
    \begin{tikzpicture}
    \draw[line width=0.4mm] (-4,2)--(-2.9,1.8)--(7,1.8);
    \draw[line width=0.3mm,gray] (-2.9,1.8)--(-2.5,1.5);
    \draw[line width=0.4mm] (-4,1.3)--(-2.5,1.5)--(7,1.5);
    \draw[line width=0.3mm,gray] (-2.2,1.5)--(-1.8,1.2);
    \draw[line width=0.4mm] (-4,0.5)--
    (-1.8,1.2);
    \draw[line width=0.4mm] (-1.8,1.2)--(4,0.8)--
    (1.8,0.4)--(3.8,0.2)--(1.9,0)--(4.1,-0.2)--(2,-0.4)--(7,-0.6);
    \draw[line width=0.3mm,gray] (-4,0.4)--(-1.3,0.4);
    \draw[line width=0.3mm,gray] (-4,-0.4)--(-1.3,-0.4);
    \draw[line width=0.3mm,gray] (-1.3,0)--(-4,0);
    \draw[line width=0.3mm,gray] (4,0.8)--(4.2,0.8);
    \draw[line width=0.3mm,gray] (1.8,0.4)--(1.7,0.4);
    \draw[line width=0.3mm,gray] (3.8,0.2)--(4.2,0.2);
    \draw[line width=0.3mm,gray] (1.9,0)--(1.7,0);
    \draw[line width=0.3mm,gray] (4.1,-0.2)--(4.2,-0.2);
    \draw[line width=0.3mm,gray] (2,-0.4)--(1.7,-0.4);
    \draw[line width=0.3mm,gray,dotted] (4.2,0.8)--(4.5,0.8);
    \draw[line width=0.3mm,gray,dotted] (4.2,0.2)--(4.5,0.2);
    \draw[line width=0.3mm,gray,dotted] (4.2,-0.2)--(4.5,-0.2);
    \draw[line width=0.3mm,gray,dotted] (1.4,0.4)--(1.7,0.4);
    \draw[line width=0.3mm,gray,dotted] (1.4,-0.4)--(1.7,-0.4);
    \draw[line width=0.3mm,gray,dotted] (1.4,0)--(1.7,0);
    \draw[line width=0.3mm,gray,dotted] (-1.3,0.4)--(-1,0.4);
    \draw[line width=0.3mm,gray,dotted] (-1.3,-0.4)--(-1,-0.4);
    \draw[line width=0.3mm,gray,dotted] (-1.3,0)--(-1,0);
    \draw[line width=0.3mm,gray,dotted] (-1,0.2)--(-0.8,0.2);
    \draw[line width=0.3mm,gray,dotted] (-1,-0.2)--(-0.8,-0.2);
    \draw[line width=0.3mm,gray,dotted] (-1,-0.5)--(-0.8,-0.5);
    \draw[line width=0.3mm,gray,dotted] (-1,0.5)--(-0.8,0.5);
    \draw[line width=0.3mm,gray,dotted] (-1,0.1)--(-0.8,0.1);
    \draw[line width=0.3mm,gray,dotted] (-1,-0.1)--(-0.8,-0.1);
    \draw[line width=0.3mm,gray] (-0.8,0.1)--(0.4,0)--(-0.8,-0.1);
    \draw[line width=0.3mm,gray] (0.4,0)--(1.1,0);
    \draw[line width=0.3mm,gray,dotted] (1.1,0)--(1.4,0);
    \draw[line width=0.3mm,gray] (-0.8,-0.2)--(0.2,-0.35)--(-0.8,-0.5);
    \draw[line width=0.3mm,gray] (0.2,-0.35)--(1.1,-0.35);
    \draw[line width=0.3mm,gray,dotted] (1.1,-0.35)--(1.4,-0.35);
    \draw[line width=0.3mm,gray] (-0.8,0.2)--(0.3,0.35)--(-0.8,0.5);
    \draw[line width=0.3mm,gray] (0.3,0.35)--(1.1,0.35);
    \draw[line width=0.3mm,gray,dotted] (1.4,0.35)--(1.1,0.35);
    \draw[line width=0.3mm,gray] (5.1,0.6) arc[start angle=0, end angle=360, x radius=0.15, y radius=0.1];
    \draw[line width=0.3mm,gray,dotted]  (4.5,0.6)--(4.7,0.6);
    \draw[line width=0.3mm,gray] (4.7,0.6)--(4.8,0.6);
    \draw[line width=0.3mm,gray] (5.1,0.6)--(5.2,0.6);
    \draw[line width=0.3mm,gray] (5.5,0.6) arc[start angle=0, end angle=360, x radius=0.15, y radius=0.1];
    \draw[line width=0.3mm,gray,dotted]  (5.6,0.6)--(5.8,0.6);
    \draw[line width=0.3mm,gray] (5.5,0.6)--(5.6,0.6);
    \draw[line width=0.3mm,gray,dotted] (5.8,0.8)--(6,0.8);
    \draw[line width=0.3mm,gray] (6,0.8)--(6.5,0.8)--(7,1);
    \draw[line width=0.3mm,gray] (6.5,0.8)--(7,0.6);
    \draw[line width=0.3mm,gray,dotted] (5.8,0.2)--(6,0.2);
    \draw[line width=0.3mm,gray] (6,0.2)--(6.3,0.2)--(7,0.4);
    \draw[line width=0.3mm,gray] (6.3,0.2)--(7,0);
    \draw (6.4,-0.4) node{$S_3$};
    \draw (2.6,1.3) node{$S_2$};
    \draw (2,2) node{$S_1$};
    \draw (-3.8,1.8) node{\tiny{$\lambda_1$}}
    (-3.8,1.2) node{\tiny{$\lambda_2$}} (-3.8,0.7) node{\tiny{$\lambda_3$}};
    \draw (6.8,1.95) node{\tiny{$1$}}
    (6.8,1.35) node{\tiny{$1$}} (6.8,-0.7) node{\tiny{$\mu_1$}};
    \draw (-1.8,1.35) node{\tiny{$E_2$}};
    \draw (-2.4,1.65) node{\tiny{$E_1$}};
    \draw[line width=0.3mm] (-4,-1)--(7,-1);
    \foreach \Point in {(-4,-1),(-2.5,-1), (-2.2,-1),(-1.8,-1),(-2.9,-1),(0.2,-1),(0.4,-1),(0.3,-1),(1.8,-1),(1.9,-1),(2,-1),(3.8,-1),(4,-1),(4.1,-1),(4.8,-1),(5.1,-1),(5.2,-1),(5.5,-1),(6.3,-1),(6.5,-1),(7,-1)}
    \draw[fill=black] \Point circle (0.03);
    \end{tikzpicture}
    \caption{An example of effective non-zigzag cover
    constructed in the case when $l(\lambda_o)=3, l(\mu_o)=1$.}
    \label{fig:effective-nonzigzag2}
\end{figure}

Case $\ak<0$:
Since $m$ is large enough, we can add $-2\ak$ ones to $\lambda_o$
and consider the partitions $(\lambda_o,1^{-2\ak})$ and $\mu_o$.
They have the same length $n=l(\mu_o)$.
Up to a reordering of the elements of $(\lambda_o,1^{-2\ak})$ and $\mu_o$,
we assume that $\lambda_1,\ldots,\lambda_n$ and $\mu_1,\ldots,\mu_n$ are two orderings of elements of $(\lambda_o,1^{-2\ak})$ and $\mu_o$ respectively such that $w_k:=\sum_{i=1}^k(\lambda_i-\mu_i)$ is different from zero for all $k=1,\ldots,n-1$, and $w=\sum_{i=1}^n\lambda_i-\sum_{i=1}^{n-1}\mu_i>0$.
The existence of such orderings is clear.

We construct a weighted and oriented graph $C$ by the same recursive procedure given in the case $\ak\geq0$.
The only difference is that the tails of the first type depicted in Figure $\ref{fig:zigzag}$ attached to the final string $S_n$ corresponds to parts of $((\lambda,1^{m+2\ak})_{o,o},(\mu,1^{m})_{o,o})$.
The resulting tropical cover $\varphi:C\to T\pb^1$ is properly mixed of type $(g,(\lambda,1^m),(\mu,1^m),\undl x)$ for sufficiently large $m$. See Figure $\ref{fig:effective-nonzigzag3}$ for an example in the case $\ak<0$.
\begin{figure}[h]
    \centering
    \begin{tikzpicture}
    \draw[line width=0.4mm] (-4,2)--(-1.8,1.8)--(7,1.8);
    \draw[line width=0.3mm,gray]
    (-2.2,1.5)--(-1.8,1.8);
    \draw[line width=0.4mm] (-4,1.3)--(-2.5,1.5)--(7,1.5);
    \draw[line width=0.3mm,gray] (-2.9,1.2)--(-2.5,1.5);
    \draw[line width=0.4mm] (-4,0.8)--
    (-2.9,1.2);
    \draw[line width=0.4mm] (-2.9,1.2)--(4,0.8)--
    (1.8,0.4)--(3.8,0.2)--(1.9,0)--(4.1,-0.2)--(2,-0.4)--(7,-0.6);
    \draw[line width=0.3mm,gray] (-4,0.4)--(-1.3,0.4);
    \draw[line width=0.3mm,gray] (-4,-0.4)--(-1.3,-0.4);
    \draw[line width=0.3mm,gray] (-1.3,0)--(-4,0);
    \draw[line width=0.3mm,gray] (4,0.8)--(4.2,0.8);
    \draw[line width=0.3mm,gray] (1.8,0.4)--(1.7,0.4);
    \draw[line width=0.3mm,gray] (3.8,0.2)--(4.2,0.2);
    \draw[line width=0.3mm,gray] (1.9,0)--(1.7,0);
    \draw[line width=0.3mm,gray] (4.1,-0.2)--(4.2,-0.2);
    \draw[line width=0.3mm,gray] (2,-0.4)--(1.7,-0.4);
    \draw[line width=0.3mm,gray,dotted] (4.2,0.8)--(4.5,0.8);
    \draw[line width=0.3mm,gray,dotted] (4.2,0.2)--(4.5,0.2);
    \draw[line width=0.3mm,gray,dotted] (4.2,-0.2)--(4.5,-0.2);
    \draw[line width=0.3mm,gray,dotted] (1.4,0.4)--(1.7,0.4);
    \draw[line width=0.3mm,gray,dotted] (1.4,-0.4)--(1.7,-0.4);
    \draw[line width=0.3mm,gray,dotted] (1.4,0)--(1.7,0);
    \draw[line width=0.3mm,gray,dotted] (-1.3,0.4)--(-1,0.4);
    \draw[line width=0.3mm,gray,dotted] (-1.3,-0.4)--(-1,-0.4);
    \draw[line width=0.3mm,gray,dotted] (-1.3,0)--(-1,0);
    \draw[line width=0.3mm,gray,dotted] (-1,0.2)--(-0.8,0.2);
    \draw[line width=0.3mm,gray,dotted] (-1,-0.2)--(-0.8,-0.2);
    \draw[line width=0.3mm,gray,dotted] (-1,-0.5)--(-0.8,-0.5);
    \draw[line width=0.3mm,gray,dotted] (-1,0.5)--(-0.8,0.5);
    \draw[line width=0.3mm,gray,dotted] (-1,0.1)--(-0.8,0.1);
    \draw[line width=0.3mm,gray,dotted] (-1,-0.1)--(-0.8,-0.1);
    \draw[line width=0.3mm,gray] (-0.8,0.1)--(0.4,0)--(-0.8,-0.1);
    \draw[line width=0.3mm,gray] (0.4,0)--(1.1,0);
    \draw[line width=0.3mm,gray,dotted] (1.1,0)--(1.4,0);
    \draw[line width=0.3mm,gray] (-0.8,-0.2)--(0.2,-0.35)--(-0.8,-0.5);
    \draw[line width=0.3mm,gray] (0.2,-0.35)--(1.1,-0.35);
    \draw[line width=0.3mm,gray,dotted] (1.1,-0.35)--(1.4,-0.35);
    \draw[line width=0.3mm,gray] (-0.8,0.2)--(0.3,0.35)--(-0.8,0.5);
    \draw[line width=0.3mm,gray] (0.3,0.35)--(1.1,0.35);
    \draw[line width=0.3mm,gray,dotted] (1.4,0.35)--(1.1,0.35);
    \draw[line width=0.3mm,gray] (5.1,0.6) arc[start angle=0, end angle=360, x radius=0.15, y radius=0.1];
    \draw[line width=0.3mm,gray,dotted]  (4.5,0.6)--(4.7,0.6);
    \draw[line width=0.3mm,gray] (4.7,0.6)--(4.8,0.6);
    \draw[line width=0.3mm,gray] (5.1,0.6)--(5.2,0.6);
    \draw[line width=0.3mm,gray] (5.5,0.6) arc[start angle=0, end angle=360, x radius=0.15, y radius=0.1];
    \draw[line width=0.3mm,gray,dotted]  (5.6,0.6)--(5.8,0.6);
    \draw[line width=0.3mm,gray] (5.5,0.6)--(5.6,0.6);
    \draw[line width=0.3mm,gray,dotted] (5.8,0.8)--(6,0.8);
    \draw[line width=0.3mm,gray] (6,0.8)--(6.5,0.8)--(7,1);
    \draw[line width=0.3mm,gray] (6.5,0.8)--(7,0.6);
    \draw[line width=0.3mm,gray,dotted] (5.8,0.2)--(6,0.2);
    \draw[line width=0.3mm,gray] (6,0.2)--(6.3,0.2)--(7,0.4);
    \draw[line width=0.3mm,gray] (6.3,0.2)--(7,0);
    \draw (6.4,-0.4) node{$S_3$};
    \draw (2.6,1.3) node{$S_2$};
    \draw (2,2) node{$S_1$};
    \draw (-3.8,1.8) node{\tiny{$1$}}
    (-3.8,1.2) node{\tiny{$1$}} (-3.8,0.7) node{\tiny{$\lambda_1$}};
    \draw (6.8,1.9) node{\tiny{$\mu_1$}}
    (6.8,1.3) node{\tiny{$\mu_2$}} (6.8,-0.7) node{\tiny{$\mu_3$}};
    \draw (-2.4,1.35) node{\tiny{$E_2$}};
    \draw (-1.7,1.65) node{\tiny{$E_1$}};
    \draw[line width=0.3mm] (-4,-1)--(7,-1);
    \foreach \Point in {(-4,-1),(-2.5,-1), (-2.2,-1),(-1.8,-1),(-2.9,-1),(0.2,-1),(0.4,-1),(0.3,-1),(1.8,-1),(1.9,-1),(2,-1),(3.8,-1),(4,-1),(4.1,-1),(4.8,-1),(5.1,-1),(5.2,-1),(5.5,-1),(6.3,-1),(6.5,-1),(7,-1)}
    \draw[fill=black] \Point circle (0.03);
    \end{tikzpicture}
    \caption{An example of effective non-zigzag cover
    constructed in the case when $l(\lambda_o)=1, l(\mu_o)=3$.}
    \label{fig:effective-nonzigzag3}
\end{figure}

$(2)$
Let $\varphi:C\to T\pb^1$ be a properly mixed tropical cover of type $(g,(\lambda,1^m),(\mu,1^m),\undl x)$ obtained in $(1)$ for sufficiently large even integer $m$.
From the construction of graph $C$ in $(1)$,
the tails in Figure $\ref{fig:zigzag}$ attached to the final string $S_n$
are labelled by $(2\lambda_{2e},2\lambda_{2o},(\lambda,1^{m-m_0})^2_{o,o})$
and $(2\mu_{2e},2\mu_{2o},(\mu,1^{m-m_1})^2_{o,o})$,
where
$$
m_0=\left\{
\begin{aligned}
    0,~~~ &\text{ if } l(\lambda_o)\geq l(\mu_o);\\
    l(\mu_o)-l(\lambda_o), &\text{ if } l(\lambda_o)< l(\mu_o),
\end{aligned}
\right.
\text{ and }
m_1=\left\{
\begin{aligned}
    0,~~~ &\text{ if } l(\lambda_o)\leq l(\mu_o);\\
    l(\lambda_o)-l(\mu_o), &\text{ if } l(\lambda_o)> l(\mu_o).
\end{aligned}
\right.
$$
When $m$ is even and large enough, it is easy to see that
$$
t_l\geq
\frac{m}{2}-N_1 \text{ and } t_r\geq
\frac{m}{2}-N_1,
$$
where $N_1=|l(\lambda_o)-l(\mu_o)|$.

Now we analyze the asymptotic behaviors of $b_l,b_r$.
Recall that $v_{n}$ is assumed to be the intersection point of $E_{n-1}$ and
$S_n$ in the recursive procedure in $(1)$.
Let $e_1$ be the inner edge of $S_n$ which is an outgoing
edge adjacent to the vertex $v_n$.
Suppose that $\omega(e_1)=a$ and the right end of string
$S_n$ is weighted by $b\in(\mu,1^{m})_{o}$.
Let $\lambda'=(2\lambda_{2e},2\lambda_{2o},2(\lambda,1^{m-m_0})_{o,o},a)$ and $\mu'=(2\mu_{2e},2\mu_{2o},2(\mu,1^{m-m_1})_{o,o},b)$.
We consider a set $S(\lambda',\mu')$ consisting of
all sequences of the form $(k_0,k_1,k_2,\ldots,k_{h})$,
where $k_0=a$, $k_h=b$,
$k_{i+1}=k_i+\lambda'_{s}$ or
$k_{i+1}=k_i-\mu'_{t}$ (each part of $(\lambda',\mu')$ is used exactly once here) for $i\in\{1,\ldots,h-1\}$.
Let $B(\lambda',\mu')$ denote the maximal number of sign
changes that occur in such sequences.
The integer $B(\lambda',\mu')$ is the maximal number of
bends of $S_n$ that we can create in the construction of $C$ in $(1)$,
so the weighted graph $C$ can be chosen such that
$S_n\subset C$ has $B(\lambda',\mu')$ bends.
Then we obtain that
$$
b_l\geq\left\lfloor
\frac{B(\lambda',\mu')}{2}\right\rfloor,
b_r\geq\left\lfloor
\frac{B(\lambda',\mu')}{2}\right\rfloor.
$$
Let $(a,k_1,\ldots,k_{h-1},b)$ be a sequence such that
the first $s$ entries $k_1,\ldots,k_s$ are obtained by
either adding a part of $(2\lambda_{2e},2\lambda_{2o})$ to
the previous entry or subtracting a part of $(2\mu_{2e},2\mu_{2o})$ from the previous entry,
and the last $h-s-1$ entries $k_{s+1},\ldots,k_{h-1}$
are obtained by either adding a part of $2(\lambda,1^{m-m_0})_{o,o}$ to the previous entry
or subtracting a part of $2(\mu,1^{m-m_1})_{o,o}$ from the previous entry.
Moreover, every element in $(\lambda',\mu')$ is used only once here.
From the above construction, we get that $s$ depends only on
$(\lambda_{2e}, \lambda_{2o})$ and $(\mu_{2e}, \mu_{2o})$,
and it does not depend on $m$.
Obviously, this sequence is contained in $S(\lambda',\mu')$.
By arranging the order on $+2$ and $-2$ suitably,
we assume that
there is a continuous segment $k_j,k_{j+1},\ldots,k_{j+m-N}$, where $j>s+1$ is an integer, in this sequence $(a,k_1,\ldots,k_{h-1},b)$ which has the form
$$
\pm1\to\mp1\to\cdots\to\pm1\to\mp1,
$$
for a fixed integer $N>0$.
Note that $N$ is not dependent on $m$.
Therefore, we get
$$
B(\lambda',\mu')\geq(m-N),
$$
for sufficiently large $m$.
Hence
$$
b_l\geq
\frac{m}{2}-N_2,~
b_r\geq
\frac{m}{2}-N_2,
$$
where $N_2=[\frac{N}{2}]+1$.
\end{proof}


\begin{lem}\label{lem:non-zigzag-asym}
Given a properly mixed
effective non-zigzag cover $\varphi:C\to T\pb^1$ of type $(g,\lambda,\mu,\undl x)$,
the number of properly mixed covers of that type
is bounded from below by:
$$
t_l!\cdot b_l!\cdot b_r!\cdot t_r!.
$$
\end{lem}

\begin{proof}
We consider certain symmetric group actions
on the inner vertices of $C$ which are mapped to
the last $r-n_1$ simple branch points $x_{n_1+1}<\cdots<x_r$
by the cover map $\varphi$.
Let $OS$ be the ordered set of vertices of $C$
which are mapped to the segment $x_{n_1+1}<\cdots<x_{n_1+t_l}$.
Any permutation on the vertices in $OS$ produces a new order on
these vertices,
so any element in $\sal_{t_l}$ gives a new properly mixed cover map.
Similarly, any permutation on the subgroups of vertices
which are mapped to segments in $x_{n_1+1}<\cdots<x_r$ of length $b_l$, $b_r$ and $t_r$ respectively
produces a new properly mixed cover.
Therefore, the number of properly mixed cover is bounded from below by $t_l!\cdot t_r!\cdot b_l!\cdot b_r!$.
\end{proof}

\begin{prop}\label{prop:non-zigzag-asymp}
Fix $g\in\nb$, partitions $\lambda$, $\mu$ with
$|\lambda|=|\mu|$.
Suppose that $\sum_{i}\lambda_o^i\geq\sum_{i}\mu_o^i$ and
$l(\lambda_o,\mu_o)>2$.
Then the logarithmic asymptotics for $z'_{g,\lambda,\mu}(m)$ as $m\to\infty$ for $m$ even is at least $2m\log m$,
where $z'_{g,\lambda,\mu}(m)=Z'_g((\lambda,1^{m}),(\mu,1^{m}))$.
\end{prop}

\begin{proof}
From Proposition $\ref{prop:non-zigzag1}$ $(1)$,
there is a properly mixed effective non-zigzag cover $\varphi:C\to T\pb^1$ of type $(g,(\lambda,1^m),(\mu,1^m),\undl x)$ for sufficiently large even integer $m$.
It follows from Lemma $\ref{lem:non-zigzag-asym}$ and Proposition $\ref{prop:non-zigzag1}$ $(2)$ that
$$
z'_{g,\lambda,\mu}(m)\geq (\frac{m}{2}-N_1)!^2\cdot
(\frac{m}{2}-N_2)!^2.
$$
Since $\log((\frac{m}{2}-N_1)!^2\cdot
(\frac{m}{2}-N_2)!^2)\sim 2m\log m$ as $m\to\infty$,
we obtain the required estimate of the logarithmic asymptotics for $z'_{g,\lambda,\mu}(m)$.
\end{proof}

\begin{proof}[Proof of Theorem $\ref{thm:asymptotic}$]
Let $H^\cb_g(m)=H^\cb_g((1)^m,(1)^m)$.
From \cite[Equation $5$]{dyz-2017} and the proof of \cite[Theorem $5.10$]{rau2019},
we know $\log h^\cb_{g,\lambda,\mu}(m)\leq
\log H^\cb_g(m)\sim 2m\log m$.
We assume that the partitions $\lambda$ and $\mu$ satisfy $\sum_{i}\lambda_o^i\geq\sum_{i}\mu_o^i$.
When $m$ is even, the statement follows from \cite[Theorem $5.10$]{rau2019} for $l(\lambda_o,\mu_o)\leq2$ and from Proposition $\ref{prop:non-zigzag-asymp}$ for $l(\lambda_o,\mu_o)>2$.
\end{proof}

\begin{proof}[Proof of Theorem $\ref{thm:main}$]
Because of Proposition $\ref{prop:rHurwitz-symmetry1}$, one can assume without loss of generality that $\sum_i\lambda_o^i\geq\sum_i\mu_o^i$.
The statement for even $m$ follows from Theorem $\ref{thm:asymptotic}$.
Applying the same argument to the partitions $(\lambda,1)$ and $(\mu,1)$, the statement also follows for odd $m$ and hence completes the proof.
\end{proof}

\section*{Data availability statements}
Data sharing not applicable to this article as no datasets were generated or analysed during the current study.

\section*{Acknowledgments}
The work on this text was done
during the author's visit at Institut de Math\'ematiques
de Jussieu-Paris Rive Gauche.
The author would like to thank IMJ-PRG for their
hospitality and excellent working conditions.
The author is deeply grateful to Ilia Itenberg for
valuable discussions and suggestions.
The author is also very grateful to the referees for
their valuable comments and suggestions on the manuscript
that allowed him to improve the presentation and to simplify the constructions in Section $4$.
This work was supported by China Scholarship Council,
the Natural Science Foundation of Henan Province (No. 212300410287) and NSFC (No.12101565).

\appendix
\section{Real double Hurwitz numbers via factorization}
In this Appendix, we give an equivalent
description of real double Hurwitz number via symmetric group.
\begin{defi}\label{def:real-factor}
A real factorization of type $(g,\lambda,\mu;s)$ is a tuple
$(\gamma,\sigma_1,\tau_1,\ldots,\tau_r,\sigma_2)$ of
elements of the symmetric group $\sal_d$
satisfying:
\begin{itemize}
    \item $\sigma_2\cdot\tau_r\cdot\cdots\cdot\tau_1\cdot\sigma_1=\id$;
    \item $r=l(\lambda)+l(\mu)+2g-2$;
    \item $\cl(\sigma_1)=\lambda$, $\cl(\sigma_2)=\mu$,
    $\cl(\tau_i)=(2,1,\ldots,1)$, $i=1,\ldots,r$;
    \item the subgroup generated by $\sigma_1$,
    $\sigma_2$, $\tau_1,\ldots,\tau_r$ acts transitively
    on the set $\{1,\ldots,d\}$.
    \item $\gamma$ is an involution (i.e. $\gamma^2=\id$)
    satisfying:
    $\gamma\circ\sigma_1\circ\gamma=\sigma_1^{-1}$
    and
    $$
    \gamma\circ(\tau_i\circ\cdot\cdot\cdot\circ\tau_{1}\circ\sigma_1)\circ\gamma=
    (\tau_i\circ\cdot\cdot\cdot\circ\tau_{1}\circ\sigma_1)^{-1}, \text{ for } i=1,\ldots,s, \text{ and}
    $$
    $$
    \gamma\circ(\tau_j\circ\cdot\cdot\cdot\circ\tau_{s+1})\circ\gamma=
    (\tau_j\circ\cdot\cdot\cdot\circ\tau_{s+1})^{-1},
    \text {for } j=s+1,\ldots,r.
    $$
\end{itemize}
\end{defi}
We denote by $\fl^\rb(g,\lambda,\mu;s)$ the set of
all real factorizations of type $(g,\lambda,\mu;s)$.

\begin{lem}\label{lem:realDH1}
Let $g$, $d$, $\lambda$ and $\mu$ be as above, then
$$
H^\rb_g(\lambda,\mu;s)=\frac{1}{d!}|\fl^\rb(g,\lambda,\mu;s)|.
$$
\end{lem}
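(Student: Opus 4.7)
The plan is to follow the classical Hurwitz monodromy construction, adapted to track the anti-holomorphic involution $\tau$ as an additional permutation $\gamma$. Fix a real base point $x_0 \in \rb P^1 \setminus (\undl p \cup \{0,\infty\})$ together with a collection of standard generators $\alpha_0, \alpha_1, \ldots, \alpha_r, \alpha_\infty$ of $\pi_1(\cb P^1 \setminus (\undl p \cup \{0,\infty\}), x_0)$ adapted to the real structure: small counterclockwise loops around $0, p_1, \ldots, p_r, \infty$ connected to $x_0$ via paths in the upper half-plane, ordered so that $p_1, \ldots, p_s$ lie on one half-axis and $p_{s+1}, \ldots, p_r$ on the other. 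The overall strategy is the same as for the classical Hurwitz theorem: establish a bijection between $\fl^\rb(g,\lambda,\mu;s)$ and the set of \emph{labeled} isomorphism classes of real Hurwitz covers (i.e., covers equipped with a bijection $\pi^{-1}(x_0)\cong\{1,\ldots,d\}$), then apply orbit-stabilizer to the free $\sal_d$-action that changes the labeling.

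Given a labeled real Hurwitz cover $(\pi,\tau)$, the monodromy representation along these generators produces a tuple $(\sigma_1, \tau_1, \ldots, \tau_r, \sigma_2) \in \sal_d^{r+2}$ satisfying the first four bullets of Definition \ref{def:real-factor}, by the classical Hurwitz theorem. Since $\conj(x_0) = x_0$, the involution $\tau$ permutes $\pi^{-1}(x_0)$ and hence defines an involution $\gamma \in \sal_d$. The identity $\pi \circ \tau = \conj \circ \pi$ implies that if $\widetilde{\alpha}$ is a lift of a loop $\alpha$ at $x_0$, then $\tau \circ \widetilde{\alpha}$ is a lift of $\conj \circ \alpha$, so the monodromy satisfies $\rho(\conj\circ\alpha) = \gamma\, \rho(\alpha)\, \gamma^{-1}$. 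For any loop whose image is invariant under complex conjugation, $\conj\circ\alpha$ is homotopic (rel $x_0$) to $\alpha^{-1}$, forcing $\gamma\,\rho(\alpha)\,\gamma = \rho(\alpha)^{-1}$. Applied to the loops encircling $\{0\}$, $\{0,p_1,\ldots,p_i\}$ for $i\leq s$, and $\{p_{s+1},\ldots,p_j\}$ for $j\geq s+1$ — all $\conj$-invariant by the chosen ordering — this produces exactly the conjugation relations of the last bullet of Definition \ref{def:real-factor}.

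Conversely, given a real factorization, Riemann's existence theorem produces a complex Hurwitz cover $\pi:C\to\cb P^1$ realizing the tuple $(\sigma_1,\tau_1,\ldots,\tau_r,\sigma_2)$ as monodromy. The involution $\gamma$ together with the conjugation relations then assembles into an anti-holomorphic involution $\tau$ on $C$ lifting $\conj$: one defines $\tau$ on the fiber $\pi^{-1}(x_0)$ via $\gamma$ and extends along paths using $\pi\circ\tau = \conj\circ\pi$, the conjugation relations being exactly what is needed for the extension to be single-valued around each branch point. Changing the labeling of $\pi^{-1}(x_0)$ by $\alpha\in\sal_d$ replaces $(\gamma,\sigma_1,\tau_1,\ldots,\sigma_2)$ by its simultaneous $\alpha$-conjugate, so isomorphism classes of real Hurwitz covers correspond bijectively to $\sal_d$-orbits on $\fl^\rb(g,\lambda,\mu;s)$, with stabilizer of a factorization equal to $\aut^\rb(\pi,\tau)$. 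Orbit-stabilizer then yields
\[
|\fl^\rb(g,\lambda,\mu;s)| \;=\; \sum_{[(\pi,\tau)]} \frac{d!}{|\aut^\rb(\pi,\tau)|} \;=\; d!\cdot H^\rb_g(\lambda,\mu;s).
\]

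The main obstacle is the geometric matching in the middle step: one must choose the standard generators so that the subspace of $\conj$-invariant loops (up to homotopy) is spanned by exactly those loops whose monodromies are listed in the last bullet of Definition \ref{def:real-factor}. This hinges on taking the base point in a convenient position relative to the real axis, on routing all generators through the upper half-plane, and on aligning the index split $i\leq s$ vs.\ $j\geq s+1$ with the positive vs.\ negative half-axis. Once this setup is fixed, the verification of both directions is a direct homotopy computation in $\cb P^1 \setminus (\undl p \cup \{0,\infty\})$, and the rest of the argument is a formal replay of the classical case.
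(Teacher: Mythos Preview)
Your proposal is correct and follows essentially the same route as the paper: both arguments adapt the classical Hurwitz monodromy construction by choosing a real base point and a system of generators for the fundamental group so that the relevant products of loops are $\conj$-invariant, encode $\tau$ as a permutation $\gamma$ of the fiber, verify the conjugation relations from the $\conj$-invariance, and conclude via orbit--stabilizer on the $\sal_d$-action relabeling the fiber. The paper is terser (citing \cite{gpmr-2015} for the bijectivity step) and pins down an explicit choice of base point and loops in a figure, whereas you spell out both directions of the correspondence more explicitly; but the underlying idea is the same.
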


\begin{proof}
The proof of this Lemma is essentially the same
as the proof of
\cite[Lemma $2.3$ and Construction $2.4$]{gpmr-2015}.
So we only give a sketch here.

We fix $r$ real points
$p_1<\ldots<p_{r-s}<0<p_{r-s+1}<\ldots<p_r$ on
$\rb P^1\setminus\{\infty\}$. Let $p_0$ be a real point
such that $p_{r-s}<p_0<0$. We choose $p_0$ as the base point.
Let $l_0,l_1,\ldots,l_r$ be $r+1$ loops depicted in
Figure $\ref{fig:real-loops1}$.
\begin{figure}[h]
    \centering
    \begin{tikzpicture}
    \draw (-3,0)--(7,0);
    \foreach\Point in {(-2,0), (0,0)}
    \draw[decoration={markings, mark=at position 0.125 with {\arrow{>}}},
        postaction={decorate}
        ] \Point circle (0.3);
    \foreach \Point in {(2,0),(3,0),(6,0)}
    \draw[decoration={markings, mark=at position 0.125 with {\arrow{>}}},
        postaction={decorate}
        ] \Point circle (0.3);
    \foreach \Point in {(-2,0),(0,0),(1,0), (2,0),(3,0),(6,0)}
    \draw[fill=black] \Point circle (0.05);
    \draw (-2,-0.5) node{$p_1$};
    \draw (0,-0.5) node{$p_{r-s}$};
    \draw (1.2,-0.3) node{$p_0$};
    \draw (2,-0.5) node{$0$};
    \draw (3,-0.5) node{$p_{r-s+1}$};
    \draw (6,-0.5) node{$p_r$};
    \draw (-1,-0.1) node{$\ldots$};
    \draw (4.5,-0.1) node{$\ldots$};
    \draw (7.5,0) node{$\rb$};
    \draw[bend left,-]  (1,0) to (2,0.3) (2,0.45) node{$l_0$};
    \draw[bend left,-]  (1,0) to (2,0.8) (2.9,0.7) node{$l_1$};
    \draw[bend left,-]  (2,0.8) to (3,0.3);
    \draw[bend left,-]  (1,0) to (2,1.3);
    \draw[bend left,-]  (2,1.3) to (6,0.3) (5.5,1) node{$l_s$};
    \draw[bend left,-]  (1,0) to (0,-0.3) (0,0.5)node{$l_{s+1}$};
    \draw[bend left,-]  (1,0) to (0,-1);
    \draw[bend left,-]  (0,-1) to (-2,-0.3)
    (-2,0.5)node{$l_r$};
    \end{tikzpicture}
    \caption{Generators of $\pi_1(\cb P^1\setminus\{0,\infty,p_1,\ldots,p_r\},p_0)$.}
    \label{fig:real-loops1}
\end{figure}
It is easy to see that $l_0,l_1,\dots,l_r$ generate the
fundamental group $\pi_1(\cb P^1\setminus\{0,\infty,p_1,\ldots,p_r\},p_0)$.
The action of complex conjugation on
$\pi_1(\cb P^1\setminus\{0,\infty,p_1,\ldots,p_r\},p_0)$
is determined by:
\begin{equation}\label{eq:conjugation}
\begin{aligned}
    &\conj(l_i\cdots l_0)=(l_i\cdots l_0)^{-1},
    ~~~~~~~~~~~~0\leq i\leq s;    \\
    &\conj(l_j\cdots l_{s+1})=(l_j\cdots l_{s+1})^{-1},
    ~~~~~~~~~~s+1\leq j\leq r.
\end{aligned}
\end{equation}
A real factorization
$(\gamma,\sigma_1,\tau_1,\ldots,\tau_r,\sigma_2)$
of type $(g,\lambda,\mu;s)$ induces a real cover as
follows: From the classical Hurwitz construction
(see \cite{hurwitz-1891} or \cite[Chapter $7$]{cm-2016}),
we know that a tuple
$(\sigma_1,\tau_1,\ldots,\tau_r,\sigma_2)$ satisfying
the first four conditions in
Definition \ref{def:real-factor} induces
a cover $\pi:C\to\cb P^1$ with ramification
profiles $\lambda$ and $\mu$ over $0$ and $\infty$,
respectively, and simple ramification over $\undl p$.
Moreover, $\pi^{-1}(p_0)$ are labelled,
{\it i.e.} $\pi^{-1}(p_0)=\{q_1,\ldots,q_d\}$,
and the monodromy actions of the loops
$l_0,\ldots,l_r$ are represented by
$\sigma_1,\tau_1,\ldots,\tau_r$ respectively.
Suppose that $p\in C$ is an unramified point.
Choose a path $\alpha$ in
$\cb P^1\setminus\{0,\infty,p_1,\ldots,p_r\}$ from
$p_0$ to $\pi(p)$. Lift $\alpha$ to a path $\tilde\alpha$
in $C$ with endpoint $p$. Let $q_k$ be the starting point
of $\tilde\alpha$. Let $\beta=\conj(\alpha)$ be the
conjugated path of $\alpha$. Then lift $\beta$ to
a path $\tilde\beta$ with starting point $q_{\gamma(k)}$.
Let $\bar p$ be the endpoint of $\tilde\beta$.
We define $\tau(p)=\bar p$. The fifth condition
in Definition \ref{def:real-factor} implies that
$\tau(p)$ is well-defined. Then one can extend $\tau$
to $C$ by standard arguments. From the construction,
we know $\pi\circ\tau=\conj\circ\pi$.
Actually, this construction gives a map
$\psi:\fl^\rb(g,\lambda,\mu;s)\to\rl$,
where $\rl$ is the set of isomorphism classes of
real Hurwitz covers of type $(g,\lambda,\mu,\undl p)$.
By a similar arguments to the proof of
\cite[Lemma $2.3$]{gpmr-2015}, we have
$\psi:\fl^\rb(g,\lambda,\mu;s)/\sal_d\to\rl$ is bijective,
and $\stab_{\sal_d}(T)=\aut(T)$,
where $T\in\fl^\rb(g,\lambda,\mu;s)$ is a factorization.
Then we get Lemma \ref{lem:realDH1}.
\end{proof}


\end{document}